\documentclass[reqno,12pt,centertags]{amsart}
\usepackage{geometry}
\geometry{left=3.5cm, right=3.5cm, top=4.5cm, bottom=4.5cm}
\usepackage{amsmath,amsthm,amscd,amssymb,latexsym,upref,stmaryrd}
\usepackage{graphicx}
% Here you can turn off all labels
%\usepackage{showkeys}
%\usepackage[nomsgs,ignoreunlbld]{refcheck}
\usepackage{hyperref}
\usepackage{enumitem}
\usepackage{xcolor}
%%%%%%%%%
%\newcommand{\arxiv}[1]{\href{http://arxiv.org/#1}{arXiv:#1}}
\newcommand*{\mailto}[1]{\href{mailto:#1}{\nolinkurl{#1}}}
\usepackage[numbers,sort&compress]{natbib}

%%%%%%%%%%%%% fonts/sets %%%%%%%%%%%%%%%%%%%%%%%

\def\th#1{\vspace{1mm}\noindent{\bf #1}\quad}

%%%%%%%%%%%%%%%%%%  abbreviations %%%%%%%%%%%%%%%%%%%

%\newcommand{\dim}{\text{\rm{dim}}}

\newcommand{\beq}{\begin{equation}}
	\newcommand{\eeq}{\end{equation}}
\newcommand{\ba}{\begin{align}}
	\newcommand{\ea}{\end{align}}

% use \hat in subscripts
% and upperlimits of int.

%%%%%%%%%%%%%%%%%%%%%% renewed commands %%%%%%%%%%%%%

%%%%%%%%%%%%%%%%%%%%%% operators %%%%%%%%%%%%%%%%%%%%

%\DeclareMathOperator{\ln}{ln}

 \allowdisplaybreaks
\numberwithin{equation}{section}
%%%%%%%%%%%%%%%%%%%% end of  definitions

%%%%%%%%%%%%%%%%%%%%%%%%%%%%%%%%%%%%%%%

\newtheorem{theorem}{Theorem}[section]
\newtheorem{lemma}[theorem]{Lemma}

\theoremstyle{definition}
\newtheorem{definition}[theorem]{Definition}
\newtheorem{proposition}[theorem]{Proposition}

\newtheorem{remark}{Remark}[section]

\begin{document}

	\title[Fourth-order inverse problems]
	{Borg-type theorem for a class of fourth-order differential operators}
	
	\author[A.~W.~Guan]{AI-WEI GUAN}
	\address{Department of Mathematics, School of Mathematics and Statistics, Nanjing University of
		Science and Technology, Nanjing, 210094, Jiangsu, People's
		Republic of China}
	\email{\mailto{guan.ivy@njust.edu.cn}}
	
	\author[C.~F.~Yang*]{CHUAN-FU Yang*}
	\address{Department of Mathematics, School of Mathematics and Statistics, Nanjing University of
		Science and Technology, Nanjing, 210094, Jiangsu, People's
		Republic of China}
	\email{\mailto{chuanfuyang@njust.edu.cn}}

    \author[N.~P.~Bondarenko]{NATALIA P. BONDARENKO}
    \address{S.M. Nikolskii Mathematical Institute, Peoples' Friendship University of Russia (RUDN University), 6 Miklukho-Maklaya Street, Moscow, 117198, Russian Federation.}
    \address{ Moscow Center of Fundamental and Applied Mathematics, Lomonosov Moscow State University, Moscow 119991, Russian Federation.}
    \email{\mailto{bondarenkonp@sgu.ru}}

	\subjclass[2020]{34A55, 34B24, 47E05}
	\keywords{Fourth-order differential operators, Inverse spectral problem, Borg-type theorem, Riesz basis.}
	\date{\today}
	\footnote{* Corresponding author} 
	
	\begin{abstract}
		{In this paper, we study an inverse spectral problem for the fourth-order differential equation $y^{(4)} - (p y')' + q y = \lambda y$ with real-valued coefficients $p$ and $q$ of $L^2(0,1)$. 
		We prove that, for near constant coefficients, the two spectra 
        corresponding to the Dirichlet and the Dirichlet-Neumann boundary conditions uniquely determine either $p$ or $q$. The result extends the Borg theorem of the second-order case to the fourth-order case.}
	\end{abstract}
	
	\maketitle
	
\section{Introduction}
	This paper deals with inverse spectral problems for the fourth-order operators generated by the differential equation
\begin{equation}\label{1}
	y^{(4)}(x) - (p(x) y(x)')' + q(x) y(x) = \lambda y(x), \quad x \in [0,1],
\end{equation}
with the following boundary conditions:
\begin{align}\label{boud1}
	\text{Dirichlet}: \quad & y(0)=y''(0)=y(1)=y''(1)=0, \\ \label{boud2}
	\text{Dirichlet-Neumann}: \quad	& y(0)=y''(0)=y'(1)=y'''(1)=0,
\end{align}
where $p$ and $q$ belong to the class $L_{\mathbb{R}}^{2}(0,1)$ of real-valued, square-integrable, measurable functions on $[0,1]$, and $\lambda$ is the spectral parameter.

Spectral theory of fourth-order differential operators causes interest of scholars because of applications in mechanics, optics, acoustics, and other fields of science (see \cite{Mik, Pol}). In particular, eigenvalue asymptotics for the boundary value problems \eqref{1}, \eqref{boud1} and \eqref{1}, \eqref{boud2}
have been studied by Schueller \cite{Sc}, Badanin and Korotyaev \cite{Bad}, Polyakov \cite{Pol2}, and other authors. 
In this paper, we focus on inverse spectral problems that consist in the recovery of the coefficients $p$ and $q$ from spectra.

The most complete results in the inverse problem
theory were obtained for the second-order operators (see, e.g., the monographs by Levitan \cite{Lev1}, Marchenko \cite{Mar}, Freiling and Yurko \cite{Fre}, Kravchenko \cite{Kra}, and references therein). In particular, Borg \cite{Bor} demonstrated that the coefficient $q(x)$ of the Sturm-Liouville equation
$$
-y''(x) + q(x) y(x) = \lambda y(x), \quad x \in (0, 1),
$$
is uniquely specified by two spectra corresponding to different sets of boundary conditions, say, $y(0) = y(1) = 0$ and $y(0) = y'(1) = 0$. A constructive method for solving the inverse Sturm-Liouville problem was proposed by Gelfand and Levitan \cite{GL51}. However, the Gelfand-Levitan method is ineffective for higher-order differential equations.

The general theory of inverse spectral problems for the higher-order differential operators 
\begin{equation}\label{high}
y^{(n)}+\sum_{k=0}^{n-2}p_k(x)y^{(k)}, \quad n\geq 2,
\end{equation}
has been developed by Yurko \cite{Yur,Yur1,Yur2} using the method of spectral mappings. Relying on those ideas, Bondarenko \cite{Bon1} obtained necessary and sufficient conditions for solvability of inverse spectral problems for  higher-order operators with distribution coefficients. 

Various issues of inverse spectral theory for the fourth-order differential operators were investigated in \cite{Bad, Bar1, Bon, Guan, Pap, SC1} and other studies. In particular, the generalization of Borg's uniqueness result \cite{Bor}
to equation \eqref{1} was considered by Barcilon \cite{Bar1}, who claimed that the coefficients $p$ and $q$ are uniquely specified by three spectra. However, the proof of uniqueness given in \cite{Bar1} was found to be incorrect \cite{Bon}. 
A correct proof was later provided by Guan et al \cite{Guan} under the so-called separation condition on the spectra.

In this paper, we suppose that one of the coefficients ($p$ or $q$) is known a priori and investigate the uniqueness of recovering the remaining coefficient from a less amount of data. For piecewise analytic coefficients, this issue has been studied by Yurko using the method of standard models (see \cite[Section~4]{Yur3}).
Specifically, for the operator \eqref{high} of even order $n = 2m$, Yurko has shown that, given $(2m-2)$ coefficients, the remaining coefficient is uniquely determined by the two spectra generated by the boundary conditions
\begin{equation*}
y(0)=y'(0)=\dots=y^{(2m-2)}(0)=y^{(j)}(\pi)=0, \quad j = 0, 1.
\end{equation*}
However, for $L^2$ coefficients, this question is still open.
Furthermore, for the even order differential operator \eqref{high} with $p_k =0$ for $k=\overline{1, n-2}$, Khachatryan  \cite{Kh} has demonstrated that, if $p_0(x)$ is a symmetric function about $\frac{1}{2}$ with sufficiently small norm, then the single Dirichlet spectrum uniquely determines $p_0(x)$ in $L^{2}(0,1)$. 
For the fourth-order equation \eqref{1}, Schuller \cite{SC1} has shown that, if $p$ and $q$ are near constant and symmetric functions, then one of them together with a spectrum uniquely specify the other one. In this paper, we remove the symmetricity assumption and prove the uniqueness by two spectra.

Let us proceed to formulating our main result.
Denote by $L_1(p,q)$ and $L_2(p,q)$ the operators generated by equation \eqref{1} with the Dirichlet \eqref{boud1} and the Dirichlet-Neumann \eqref{boud2} boundary conditions, respectively. These operators are understood in the sense of sesquilinear forms (see Subsection~\ref{sec:sesq} for details).
Denote $\mathbf{p}:=(p,q)$, $\mathbf{p_i}:=(p_i,q_i)$, $p_i, q_i \in L_{\mathbb R}^2(0,1)$, $i=\overline{1,4}$, $\mathbf{a}:=(a_1,a_2)$, $a_1,a_2\in \mathbb{R}$, $\mathbf{0}:=(0,0)$, $\mathbf{E}:=L_{\mathbb R}^2(0,1) \times L_{\mathbb R}^2(0,1)$. Let $\{{\lambda}_n (\mathbf{p})\}_{n \ge 1}$ and $\{{\mu}_n (\mathbf{p})\}_{n\geq1}$ be the spectra of the operators $L_1(\mathbf{p})$ and $L_2(\mathbf{p})$, respectively.

Denote by $B(\mathbf{a},\varepsilon)$ the ball of radius $\varepsilon>0$ about $\mathbf{a}$ in $\mathbf{E}$.
Let 
$$
\mathbf{W}=\{\mathbf{p}\in \mathbf{E} \colon L_1(\mathbf{p}) \text{ and } L_2(\mathbf{p}) \text{ have only simple eigenvalues}\}.
$$

Our uniqueness theorem is formulated as follows.
 
\begin{theorem}\label{thm}
Let $\mathbf{a} \in \mathbf{W}$ be constant. Then, there exists an $\varepsilon>0$ such that, if $\mathbf{p_1}$, $\mathbf{p_2}\in B(\mathbf{a},\varepsilon)$, $\lambda_n(\mathbf{p}_1)=\lambda_n(\mathbf{p}_2)$ and $\mu_n(\mathbf{p}_1)={\mu}_n(\mathbf{p}_2)$ for all $n\geq 1$, then

(1) $q_1(x)=q_2(x)$ a.e. on $(0,1)$ implies $p_1(x)=p_2(x)$ a.e. on $(0,1)$;

(2) $p_1(x)=p_2(x)$ a.e. on $(0,1)$ implies $q_1(x)=q_2(x)$ a.e. on $(0,1)$.
\end{theorem}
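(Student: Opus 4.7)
The plan is to reduce the Borg-type uniqueness to a completeness question for an auxiliary family of functions via a Feynman--Hellmann path-integral argument, and then to verify that completeness perturbatively against an explicit limit family at the constant coefficient $\mathbf{a}$.

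To begin, I would establish smoothness of the eigenvalues as functions of $\mathbf p$. The sesquilinear form $a_{\mathbf p}(y,z) = \int_0^1 (y''\overline{z''} + p\,y'\overline{z'} + q\,y\overline z)\,dx$ defining $L_1(\mathbf p), L_2(\mathbf p)$ is linear, hence entire, in $\mathbf p \in \mathbf{E}$. Because $\mathbf a \in \mathbf W$ makes every eigenvalue of $L_j(\mathbf a)$ simple and isolated, Kato's analytic perturbation theory for type-(B) holomorphic families produces real-analytic curves $\mathbf p \mapsto \lambda_n(\mathbf p), \mu_n(\mathbf p)$ together with analytic, $L^2$-normalized eigenfunctions $\phi_n^{\mathbf p}, \psi_n^{\mathbf p}$ in a neighbourhood of $\mathbf a$. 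The Feynman--Hellmann identity then reads
\begin{equation*}
d\lambda_n(\mathbf p)[\delta p,\delta q] = \int_0^1 \Bigl((\phi_n^{\mathbf p})'(x)^2\,\delta p(x) + \phi_n^{\mathbf p}(x)^2\,\delta q(x)\Bigr)\,dx,
\end{equation*}
with the analogous formula for $\mu_n$ in terms of $\psi_n^{\mathbf p}$.

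Next, integrating this first-variation formula along the segment $\mathbf p(t) := \mathbf p_1 + t(\mathbf p_2 - \mathbf p_1)$, $t\in[0,1]$ (which lies in $B(\mathbf a, \varepsilon)$ by convexity), reduces the nonlinear identification to a linear one. In case~(1), $q_1 = q_2$ together with $\lambda_n(\mathbf p_1) = \lambda_n(\mathbf p_2)$ and $\mu_n(\mathbf p_1) = \mu_n(\mathbf p_2)$ yield
\begin{equation*}
\int_0^1 h(x)\,G_n(x)\,dx = 0, \qquad \int_0^1 h(x)\,H_n(x)\,dx = 0, \qquad n \ge 1,
\end{equation*}
for $h := p_2 - p_1$, with $G_n(x) := \int_0^1 (\phi_n^{\mathbf p(t)})'(x)^2\,dt$ and $H_n(x) := \int_0^1 (\psi_n^{\mathbf p(t)})'(x)^2\,dt$. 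It therefore suffices to prove that $\{G_n\}_{n\ge 1} \cup \{H_n\}_{n\ge 1}$ is total in $L^2(0,1)$ for $\varepsilon$ small. At the constant $\mathbf a$, a short direct computation, using that the Dirichlet and the Dirichlet--Neumann conditions kill the hyperbolic part of the characteristic decomposition of $r^4 - a_1 r^2 + a_2 = \lambda$, gives $\phi_n^{\mathbf a}(x) = \sqrt 2\sin(n\pi x)$ and $\psi_n^{\mathbf a}(x) = \sqrt 2\sin((n-\tfrac12)\pi x)$, independently of $a_1, a_2$. Consequently
\begin{equation*}
(\phi_n^{\mathbf a})'(x)^2 = (n\pi)^2\bigl(1+\cos(2n\pi x)\bigr), \quad (\psi_n^{\mathbf a})'(x)^2 = \bigl((n-\tfrac12)\pi\bigr)^2\bigl(1+\cos((2n-1)\pi x)\bigr),
\end{equation*}
and after rescaling the limit family is $\{1+\cos(m\pi x):m\ge 1\}$; orthogonality of $h \in L^2(0,1)$ to all of these forces the cosine Fourier coefficients to satisfy $b_0 + b_m = 0$ for every $m \ge 1$, whence $b_0 = 0$ by Bessel and then $b_m = 0$, so $h = 0$. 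Case~(2) is handled symmetrically, the relevant kernels being $\phi_n^{\mathbf a}(x)^2 = 1-\cos(2n\pi x)$ and $\psi_n^{\mathbf a}(x)^2 = 1-\cos((2n-1)\pi x)$.

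The final and hardest step is transferring completeness from the limit family to $\{G_n\}\cup\{H_n\}$ on all of $B(\mathbf a, \varepsilon)$. I would argue by contradiction and weak $L^2$ compactness: if the theorem failed on a sequence $\varepsilon_k \to 0$, extract unit-norm perturbations $h_k$ satisfying the orthogonality relations against the corresponding $G_n^k, H_n^k$, pass to a weak limit $h_\infty$, exploit continuity of $\mathbf p \mapsto \phi_n^{\mathbf p}$ at each fixed $n$ to transfer the orthogonality to the limit family, conclude $h_\infty = 0$, and produce a contradiction via a quantitative lower bound of the form $|\int_0^1 h_k G_n^k\,dx| \ge c\|h_k\|$ for some well-chosen $n = n(k)$. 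The principal obstacle I anticipate is producing uniform-in-$n$ eigenfunction estimates such as $\|G_n - (\phi_n^{\mathbf a})'^2\|_{L^2} = O(\varepsilon\,(n\pi)^2)$, together with high-frequency decoupling, since for a fourth-order operator with merely $L^2$ coefficients the eigenfunction perturbation asymptotics are substantially more delicate than in the classical second-order Borg setting; the sharp spectral/eigenfunction asymptotics developed by Badanin--Korotyaev and Polyakov are likely to be the key auxiliary input that closes the argument.
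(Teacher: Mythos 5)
Your derivation of the orthogonality relations is a genuinely different (and legitimate) route from the paper's. The paper does not differentiate the eigenvalues at all: it evaluates the sesquilinear form $\mathfrak{q}_{(p_i,q_i)}$ on the \emph{pair} of eigenfunctions $\phi_n(\cdot;\mathbf{p}_1)$, $\phi_n(\cdot;\mathbf{p}_2)$ and subtracts, obtaining the exact identity
\begin{equation*}
\int_0^1 (p_1-p_2)\,\phi_n'(x;\mathbf{p}_1)\phi_n'(x;\mathbf{p}_2)\,dx+\int_0^1 (q_1-q_2)\,\phi_n(x;\mathbf{p}_1)\phi_n(x;\mathbf{p}_2)\,dx=0,
\end{equation*}
so the test functions are products of eigenfunctions of the two given potentials rather than your path averages $G_n=\int_0^1((\phi_n^{\mathbf{p}(t)})')^2\,dt$. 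Your Feynman--Hellmann version is correct in substance but needs the additional (unstated) justification that every branch $\lambda_n(\mathbf p(t))$, $\mu_n(\mathbf p(t))$ stays simple and analytic along the whole segment; the paper's bilinear identity avoids this entirely. Your analysis of the limit family $\{1+\cos(m\pi x)\}_{m\ge1}$ and the Bessel/Riemann--Lebesgue step recovering the mean-zero condition match what the paper does (it extracts $\int(p_1-p_2)=0$ from the eigenvalue asymptotics and $\int(q_1-q_2)=0$ from Riemann--Lebesgue).

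The genuine gap is in your final step. Mere completeness (totality) of a family is \emph{not} stable under small $L^2$ perturbations, and your compactness argument stalls exactly where you say it does: a weak limit $h_\infty=0$ is perfectly compatible with $\|h_k\|=1$, so you must produce the quantitative lower bound $\sup_n|\int h_k G_n^k|\ge c\|h_k\|$ --- but that bound \emph{is} the lower frame inequality of a Riesz basis, i.e. it is the theorem you are trying to prove at that point, not an auxiliary fact. The paper closes this by (i) proving uniform-in-$\mathbf p$, $O(n^{2-\alpha})$ sup-norm estimates for differences of eigenfunction products (Lemma~\ref{prop}, built on the resolvent-kernel bounds $\|R(\mathbf p_1,z)-R(\mathbf p_2,z)\|_{1,\infty}\le C\|\mathbf p_1-\mathbf p_2\|n^{2-2\alpha}$ of Caudill--Perry--Schueller on the contours $\gamma_n$), which makes the tail $\sum_{n>P}\|d_n-e_n\|^2$ small; (ii) handling the finitely many low modes by real-analytic dependence; and (iii) invoking the Paley--Wiener-type stability of Riesz bases (Proposition~\ref{le3}) with the explicit threshold $\sum\|d_n-e_n\|^2<1$. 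Pointing to Badanin--Korotyaev or Polyakov eigenvalue asymptotics will not substitute for (i): what is needed is a bound on eigenfunction products that is uniform over a ball of $L^2$ coefficients and square-summable in $n$, and that is precisely the technical core of the paper that your proposal leaves open.
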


The proof of Theorem~\ref{thm} is based on developing the technique of \cite{Bor, Cau, SC1}. We compose a special Riesz basis by using the eigenfunctions of the four operators $L_k(\mathbf{p}_i)$, $i,k = 1, 2$. For this purpose, we 
study the asymptotic properties of the eigenfunctions and obtain the uniform estimates for them as $\mathbf{p_i}$ are bounded.

The paper is organized as follows. In Section~\ref{sec:prelim}, we define the fourth-order operators by using sesquilinear forms, explicitly present eigenvalues and eigenfunctions for the unperturbed operators, and recall basic facts about Riesz bases.
In Section~\ref{sec:est}, the estimates for the eigenfunctions of the perturbed operators are derived. Section~\ref{sec:proof} contains the proof of Theorem~\ref{thm}.

\section{Preliminaries} \label{sec:prelim}

\subsection{Sesquilinear forms} \label{sec:sesq}

For non-smooth coefficients $p$ and $q$ in equation \eqref{1}, it is convenient to define the corresponding fourth-order differential operators by the method of sesquilinear forms (see \cite{Sc}, Sect. 5.3). 
Consider the closed sesquilinear forms
\begin{equation}\label{frakq}
\mathfrak{q}_{(p,q)}[y,z]=(y'',z'')+(py',z')+(qy,z),
\end{equation}
on the respective domains
\begin{equation}\label{Q_1}
\mathcal{Q}_1=\{y\in H^2[0,1]: y(0)=y(1)=0\}.
\end{equation}
and
\begin{equation}\label{Q_2}
\mathcal{Q}_2=\{y\in H^2[0,1]: y(0)=y'(1)=0\}.
\end{equation}

According to the form representation theorem (see, for example, Theorem $\text{VI.2.1}$ of \cite{Ka}), these forms are associated with the unique sectorial operators, denote them by $L_1(p, q)$ and $L_2(p, q)$, respectively.
Furthermore, the domains of $L_1(p, q)$ and $L_2(p, q)$ are contained in $H^3[0,1]$, with Dirichlet boundary conditions for $L_1$ and Dirichlet-Neumann conditions for $L_2$. Employing the form representation theorem, it can be demonstrated that, if $p,q\in C^{1}[0,1]$, then $L_1(p,q)$ and $L_2(p,q)$ are generated by the boundary value problems \eqref{1}, \eqref{boud1} and \eqref{1}, \eqref{boud2}, respectively. This fact motivates the choice of
$\mathcal{Q}_1$ and $\mathcal{Q}_2$.

\subsection{Eigenfunctions and eigenvalues of unperturbed operators}

For $\mathbf{p} \in \mathbf{E}$ and $n \ge 1$, denote by $\phi_n(x; \mathbf{p})$ and $\psi_n(x; \mathbf{p})$ the normalized eigenfunctions corresponding to the eigenvalues $\lambda_n(\mathbf{p})$ and $\mu_n(\mathbf{p})$ of the operators $L_1(\mathbf{p})$ and $L_2(\mathbf{p})$, respectively.

For constant coefficients $\mathbf{a} = (a_1, a_2) \in \mathbb R^2$, the eigenfunctions and the eigenvalues can be presented in the explicit form:
\begin{equation*}
	{\phi}_{n}(x;\mathbf{\mathbf{a}})=\sqrt{2} \sin(n\pi x), 
\end{equation*}
\begin{equation} \label{psia}	        
{\psi}_{n}(x;\mathbf{\mathbf{a}})=\sqrt{2} \sin\bigg(\bigg(n+\frac{1}{2}\bigg)\pi x\bigg)
\end{equation}
and
\begin{equation*}
	\lambda_n(\mathbf{a})=n^4\pi^4+a_1n^2\pi^2+a_2,
\end{equation*}
\begin{equation} \label{muna}	       
	\mu_n(\mathbf{a})=\bigg(n+\frac{1}{2}\bigg)^4\pi^4+a_1\bigg(n+\frac{1}{2}\bigg)^2\pi^2+a_2.
\end{equation}

Note that the equalities $\lambda_k(\mathbf{a})=\lambda_{k+m}(\mathbf{a})$ and $\mu_i(\mathbf{b})=\mu_{i+j}(\mathbf{b})$ hold for constant coefficients of the form
\begin{equation*}
	\mathbf{a}_{k,m}=(-\pi^2(k^2+(k+m)^2),a_2),\;\;\;\;k=1,2,\dots, \;\;\;\;m=1,2,\dots,
\end{equation*}
and
\begin{equation*}
	\mathbf{b}_{i,j}=(-\pi^2\bigg(\bigg(i+j+\frac{1}{2}\bigg)^2+\bigg(i+\frac{1}{2}\bigg)^2),a_2\bigg),\;\;\;\;i=1,2,\dots, \;\;\;\;j=1,2,\dots,
\end{equation*}
respectively.

Denote 
$$
\mathbf{A}^c :=  \bigcup\limits_{k,t=1}^\infty\bigg\{\bigg(-\pi^2\bigg(\bigg(\frac{k+1}{2}\bigg)^2+\bigg(\frac{k+1}{2}+t\bigg)^2\bigg),y\bigg): y\in\mathbb{R}\bigg\}.
$$

Thus, there is the non-empty subset $\mathbb R^2 \setminus \mathbf{A}^c$ of constants $\mathbf{a}$ that belong to $\mathbf{W}$.

\subsection{Riesz basis}
\begin{definition}
	A set of vectors $\{e_n\}_{n\geq0}$ is called a Riesz basis of a separable Hilbert space $\mathcal{H}$ if 
	
\begin{itemize}\label{Rie}
	\item $\{ e_n \}_{n \ge 0}$ is complete in $\mathcal{H}$, i.e., if $(u,e_n)=0$ for all $n$ and some $u \in\mathcal{H} $, then $u=0$;
	\item the vectors $\{ e_n \}_{n \ge 0}$ are independent, i.e., if $\sum_{n=0}^{\infty}
	c_ne_n=0$ for $\{c_n\}\in l^2$, then $c_n=0$ for all $n$;
	\item there exist strictly positive constants $a$ and $A $, such that
	the following inequality holds for every $u \in \mathcal H$: $$a\|u\|^2\leq\sum_{n=0}^{\infty}|(u,e_n)|^2\leq A\|u\|^2.$$
\end{itemize}
\end{definition}

An important fact is that a small perturbation of a Riesz basis is also a Riesz basis: 

\begin{proposition}\label{le3}\cite{SC1}
Let $\left\{e_n\right\}_{n \ge 0}$ be a Riesz basis of a separable Hilbert space $\mathcal{H}$. Let $\{ d_n \}_{n \ge 0}$ be a collection of vectors in $\mathcal{H}$ such that 
\begin{equation}\label{*}
\sum_{n=0}^{\infty}\left\|d_n-e_n\right\|^2<\frac{a^2}{A} .
\end{equation}
Then, the vector set $\left\{d_n\right\}_{n \ge 0}$ forms a Riesz basis of $\mathcal{H}$. 	
\end{proposition}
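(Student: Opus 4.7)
The strategy is to construct a bounded invertible operator $T \colon \mathcal H \to \mathcal H$ with $T e_n = d_n$; then $\{d_n\} = \{T e_n\}$, as the image of a Riesz basis under a linear isomorphism, will satisfy the three defining conditions of a Riesz basis by direct transport. To build $T$, I first set up the canonical dual (biorthogonal) system to $\{e_n\}$. Introduce the frame operator $F u := \sum_{n \ge 0} (u, e_n) e_n$, which by the frame inequality (third bullet of the definition) satisfies $a I \le F \le A I$; so $F$ is self-adjoint, positive, and boundedly invertible with $\|F^{-1}\| \le 1/a$. Set $e_n^* := F^{-1} e_n$. The identity $u = F F^{-1} u$ combined with self-adjointness of $F^{-1}$ gives the reconstruction $u = \sum_{n \ge 0} (u, e_n^*) e_n$, and matching this with $e_m = \sum_n \delta_{nm} e_n$ through the $l^2$-independence (second bullet) forces the biorthogonality $(e_n, e_m^*) = \delta_{nm}$. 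Spectral calculus on $F$ also yields the dual-side frame bound
\begin{equation*}
\sum_{n \ge 0} |(u, e_n^*)|^2 = (F^{-1} u, u) \le \frac{1}{a} \|u\|^2.
\end{equation*}

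Next, define $T u := \sum_{n \ge 0} (u, e_n^*) d_n$; by biorthogonality $T e_n = d_n$. Writing $(T - I) u = \sum_{n \ge 0} (u, e_n^*)(d_n - e_n)$ and applying Cauchy-Schwarz in $l^2$ together with the hypothesis \eqref{*},
\begin{equation*}
\|(T - I) u\|^2 \le \Big(\sum_{n \ge 0} |(u, e_n^*)|^2 \Big) \Big(\sum_{n \ge 0} \|d_n - e_n\|^2 \Big) < \frac{1}{a} \cdot \frac{a^2}{A} \|u\|^2 = \frac{a}{A} \|u\|^2 \le \|u\|^2.
\end{equation*}
Hence $\|T - I\| < 1$ strictly, and the Neumann series delivers a bounded inverse $T^{-1} = \sum_{k \ge 0} (I - T)^k$. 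Transferring the three Riesz-basis properties to $\{d_n\} = \{T e_n\}$ is then routine bookkeeping: completeness from $(u, d_n) = (T^* u, e_n)$ and injectivity of $T^*$; $l^2$-independence from $T(\sum c_n e_n) = \sum c_n d_n = 0$ together with injectivity of $T$; and the frame bounds from $\sum |(u, d_n)|^2 = \sum |(T^* u, e_n)|^2$ sandwiched between $a \|T^{-1}\|^{-2} \|u\|^2$ and $A \|T\|^2 \|u\|^2$.

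The one subtle point I expect to flag is the role of the strictness in \eqref{*}: when $\{e_n\}$ happens to be orthonormal we have $a = A$, so $a/A = 1$, and a non-strict hypothesis would only yield $\|T - I\| \le 1$, which is insufficient for Neumann-series invertibility. It is precisely the strict bound $\sum \|d_n - e_n\|^2 < a^2/A$ that forces $\|T - I\| < 1$ and makes the entire scheme close; everything else is a routine packaging of Hilbert-space manipulations around the frame operator and its bounded inverse.
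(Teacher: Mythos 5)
The paper itself gives no proof of this proposition---it is quoted directly from \cite{SC1}---so there is nothing internal to compare against; your argument is the standard Paley--Wiener-type stability proof (construct $T$ with $Te_n=d_n$ via the dual system $e_n^*=F^{-1}e_n$, bound $\|T-I\|<1$ by Cauchy--Schwarz against the fixed constant $\sum\|d_n-e_n\|^2<a^2/A$, and invert by Neumann series), which is essentially the proof in the cited source. It is correct and complete, including the correct handling of the strictness issue and the uniformity of the bound $\|T-I\|^2\le a^{-1}\sum_n\|d_n-e_n\|^2<a/A\le 1$.
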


\begin{remark}
In particular, if  $\{e_n\}_{n\geq0}$ is an orthonormal basis of $\mathcal{H}$ (which is also a Riesz basis of $\mathcal{H}$), then 
$$
\|u\|^2=\sum_{n=0}^{\infty}|(u,e_n)|^2. 
$$
Consequently, the condition \eqref{*} in Proposition~\ref{le3} becomes 
$$
\sum_{n=0}^{\infty}\left\|d_n-e_n\right\|^2<1.
$$
\end{remark}

More information about Riesz bases can be found in \cite{Christ}.

\section{Estimates for eigenfunctions} \label{sec:est}

In this section, we obtain technical estimates for the eigenfunctions $\{ \phi_n(x;\mathbf{p})\}$, $\{ \psi_n(x; \mathbf{p})\}$ and their derivatives, which are crucial for the proof of Theorem~\ref{thm}. First of all, we need the analytic dependence of the eigenfunctions on the coefficients $\mathbf{p}$ of equation \eqref{1}:

\begin{proposition}\label{le2}\cite{Cau,SC1}
	The functions $\phi_n(x;\cdot)$, $\phi_n'(x;\cdot)$, $\psi_n(x;\cdot)$ and $\psi_n'(x;\cdot)$ are real-analytic at every point of  $\mathbf{W}$.
\end{proposition}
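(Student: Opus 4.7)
My plan is to establish Proposition~\ref{le2} by combining Kato's analytic perturbation theory for sectorial forms with the classical analytic dependence of solutions of the fourth-order ODE \eqref{1} on its coefficients.

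The first step is to verify that $\mathbf{p} \mapsto L_k(\mathbf{p})$, $k = 1, 2$, is an analytic family of type (B) in a complex neighborhood of any $\mathbf{p}_0 \in \mathbf{W}$. The sesquilinear form $\mathfrak{q}_{(p,q)}$ in \eqref{frakq} is linear, and hence entire-analytic, in $\mathbf{p} = (p,q)$; its form domain $\mathcal{Q}_k$ is independent of $\mathbf{p}$; and for complex $\mathbf{p}$ close to a real $\mathbf{p}_0$ the lower-order terms $(py',z') + (qy,z)$ are dominated by the principal form $(y'',z'')$ in the usual way, giving closed sectoriality. Since each eigenvalue of $L_k(\mathbf{p}_0)$ is simple by the definition of $\mathbf{W}$, Kato's perturbation theorems (\cite{Ka}, VII.1.7--1.8) then imply that the eigenvalues $\lambda_n(\mathbf{p})$, $\mu_n(\mathbf{p})$ and the associated Riesz spectral projections $P_n(\mathbf{p})$ are analytic in $\mathbf{p}$ throughout some complex neighborhood $U$ of $\mathbf{p}_0$. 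Applying $P_n(\mathbf{p})$ to a fixed reference vector and normalizing in $L^2$ produces an analytic branch $\mathbf{p} \mapsto \tilde\phi_n(\cdot;\mathbf{p}) \in L^2(0,1)$; the sign ambiguity is removed by continuity along the real slice $U \cap \mathbf{W}$.

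To upgrade $L^2$-analyticity to the pointwise statement, I would use the ODE structure. Each eigenfunction of $L_k(\mathbf{p})$ is a classical $H^3$-solution of \eqref{1} with $\lambda = \lambda_n(\mathbf{p})$ (or $\mu_n(\mathbf{p})$) subject to the prescribed boundary conditions. Let $\{Y_j(x;\mathbf{p},\lambda)\}_{j=1}^{4}$ be the fundamental system of \eqref{1} determined by $Y_j^{(i-1)}(0) = \delta_{ij}$. The standard analytic dependence theorem for linear ODEs yields joint analyticity of the $Y_j$ and their $x$-derivatives in $(\mathbf{p},\lambda)$ with values in $C^3[0,1]$. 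Imposing the boundary conditions produces a one-dimensional eigenspace spanned by an analytic linear combination of the $Y_j$; this combination agrees, up to sign, with $\tilde\phi_n(\cdot;\mathbf{p})$ after $L^2$-normalization. Consequently $\mathbf{p} \mapsto \phi_n(\cdot;\mathbf{p})$ is analytic with values in $C^1[0,1]$, and composing with the continuous evaluations $y \mapsto y(x)$ and $y \mapsto y'(x)$ delivers the desired real-analyticity of $\phi_n(x;\cdot)$ and $\phi_n'(x;\cdot)$ on $\mathbf{W}$; the argument for $\psi_n$ and $\psi_n'$ is identical.

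The main delicate point is reconciling the two constructions: I need the boundary-condition determinant coming from the fundamental system to have a simple zero in $\lambda$ at $\lambda_n(\mathbf{p}_0)$, so that the ODE-based eigenfunction is well-defined and depends analytically on $\mathbf{p}$ after shrinking $U$, and I then need to identify this eigenfunction with the Kato branch $\tilde\phi_n$. Both facts follow from the simplicity of the eigenvalue at $\mathbf{p}_0$ together with a dimension count, and this identification closes the argument.
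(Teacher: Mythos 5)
The paper does not actually prove this proposition; it is imported verbatim from \cite{Cau,SC1}, so the comparison is really with the argument in those references. Your first half coincides with it: in \cite{Cau} the operators $L_k(\mathbf{p})$ are shown to form a self-adjoint holomorphic family of type (B) on the fixed form domains $\mathcal{Q}_k$, with the lower-order part of \eqref{frakq} relatively form-bounded with bound zero with respect to $(y'',z'')$, and simplicity of the eigenvalues on $\mathbf{W}$ then gives analyticity of $\lambda_n(\mathbf{p})$, $\mu_n(\mathbf{p})$ and of the Riesz projections $P_n(\mathbf{p})$. Where you diverge is the upgrade from $L^2$- to pointwise analyticity: the references (and, implicitly, this paper in Lemma~\ref{le1}) get it directly from the fact that $P_n(\mathbf{p})$ and $D P_n(\mathbf{p}) D$ are analytic as operators in the $\|\cdot\|_{1,\infty}$ norm and have integral kernels $\phi_n(x;\mathbf{p})\phi_n(y;\mathbf{p})$ and $\phi_n'(x;\mathbf{p})\phi_n'(y;\mathbf{p})$, so that sup-norm control of the kernels, hence of $\phi_n(x;\cdot)$ and $\phi_n'(x;\cdot)$ after fixing the sign at a point where the eigenfunction is bounded away from zero, comes for free. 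Your alternative via the fundamental system of \eqref{1} is workable but needs two repairs: with $p,q$ only in $L^2$ the equation must be read in quasi-derivative form (the natural Cauchy data at $0$ are $y, y', y'', y'''-py'$, since $y'''$ itself need not be absolutely continuous), and you must actually prove, not merely assert, that algebraic simplicity of the eigenvalue forces the characteristic determinant to have a simple zero in $\lambda$ so that the normalized ODE eigenfunction is single-valued and analytic in $\mathbf{p}$. The kernel route avoids both issues and is also the one that feeds directly into the quantitative estimates of Section~\ref{sec:est}, which is presumably why the authors adopt it.
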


Introduce the notations
\begin{equation*}
	\tilde{\phi}^{\prime}_n(x;\mathbf{p})=\frac{\phi_n^{\prime}(x;\mathbf{p})}{n\pi}, \quad
	\tilde{\psi}^{\prime}_n(x;\mathbf{p})=\frac{\psi_n^{\prime}(x;\mathbf{p})}{\big(n+\frac{1}{2}\big)\pi}. 
\end{equation*}

The main result of this section is the following lemma.

\begin{lemma}\label{prop}
Let $2<\alpha<3$ and $M>0$. Then, there exists a positive integer $P$, depending only on $\alpha$ and $M$, such that, for any $\mathbf{p}_1, \mathbf{p}_2, \mathbf{p}_3, \mathbf{p}_4 \in B(\mathbf{0}, M)$ and $n>P$, there hold
\begin{align}\label{a1}
&    \sup _{x \in[0,1]}\!\!\!\left| \phi_n(x;\! \mathbf{p}_1) \phi_n(x; \!\mathbf{p}_2)\!\!-\!\!\phi_n\!\left(x ;\! \mathbf{p}_3\right) \phi_n\!\left(x ;\! \mathbf{p}_4\right)\right|\!\!\leq\!\! C\!\left(\|\mathbf{\bf{p}_1}\!\!-\!\!\mathbf{\bf{p}_2}\|\!\!+\!\!\|\mathbf{p_1}\!\!-\!\!\mathbf{p_3}\|\!\!+\!\!\|\mathbf{p_1}\!\!-\!\!\mathbf{p_4}\|
		\right) \!n^{2-\alpha}, \\
		\label{a2}
&		\sup _{x \in[0,1]}\!\!\!\left| \psi_n(x;\!\mathbf{p}_1\!) \psi_n(x;\! \mathbf{p}_2)\!\!-\!\!\psi_n\!\left(\!x ;\!\mathbf{p}_3\right) \psi_n\!\left(\!x ;\mathbf{p}_4\!\right)\right|\!\!\leq\!\!C\!\left(\|\mathbf{\bf{p}_1}\!\!-\!\!\mathbf{\bf{p}_2}\|\!\!+\!\!\|\mathbf{p_1}\!\!-\!\!\mathbf{p_3}\|\!\!+\!\!\|\mathbf{p_1}\!\!-\!\!\mathbf{p_4}\|
		\right) \!n^{2-\alpha}, \\ \label{b1}
&		\sup _{x \in[0,1]}\!\!\!|\tilde{\phi}_n^{\prime}(x;\!\mathbf{p}_1\!) \tilde{\phi}_n^{\prime}(x; \mathbf{p}_2)\!\!-\!\!\tilde{\phi}_n^{\prime}\!\left(x ;\!\mathbf{p}_3\right) \tilde{\phi}_n^{\prime}\!(x ;\!\mathbf{p}_4)|\!\!\leq\!\!C\!\left(\|\mathbf{\bf{p}_1}\!\!-\!\!\mathbf{\bf{p}_2}\|\!\!+\!\!\|\mathbf{p_1}\!\!-\!\!\mathbf{p_3}\|\!\!+\!\!\|\mathbf{p_1}\!\!-\!\!\mathbf{p_4}\|
		\right) \!n^{2-\alpha}, \\
		\label{b2}
&		\sup _{x \in[0,1]}\!\!\!\left|\tilde{\psi}_n^{\prime}(x;\!\mathbf{p}_1) \tilde{\psi}_n^{\prime}(x;\! \mathbf{p}_2)\!\!-\!\!\tilde{\psi}_n^{\prime}\!\left(x ;\!\mathbf{p}_3\right) \tilde{\psi}_n^{\prime}\!\left(\!x ;\!\mathbf{p}_4\right)\!\right|\!\!\leq\!\!C\!\left(\|\mathbf{\bf{p}_1}\!\!-\!\!\mathbf{\bf{p}_2}\|\!\!+\!\!\|\mathbf{p_1}\!\!-\!\!\mathbf{p_3}\|\!\!+\!\!\|\mathbf{p_1}\!\!-\!\!\mathbf{p_4}\|
		\right) \!n^{2-\alpha},
\end{align}	
where $C$ is a positive constant depending only on $\alpha$ and $M$.
\end{lemma}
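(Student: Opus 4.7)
The plan is to split each of \eqref{a1}--\eqref{b2} into single‑function pieces via the telescoping identity
\[
A_1A_2 - A_3A_4 = A_1(A_2 - A_1) + A_1(A_1 - A_4) + (A_1 - A_3)A_4,
\]
applied with $A_i$ equal to each of $\phi_n(x;\mathbf{p}_i)$, $\psi_n(x;\mathbf{p}_i)$, $\tilde\phi_n'(x;\mathbf{p}_i)$, $\tilde\psi_n'(x;\mathbf{p}_i)$. This reduces the lemma to two ingredients, valid for all $\mathbf{p},\tilde{\mathbf{p}}\in B(\mathbf{0},M)$ and $n>P$:
(U) a uniform sup‑norm bound $\sup_{x,\mathbf{p}}|\phi_n(x;\mathbf{p})|+|\tilde\phi_n'(x;\mathbf{p})|\le C$ and the $\psi$‑analogue; and
(L) a Lipschitz estimate $\sup_x |\phi_n(x;\mathbf{p})-\phi_n(x;\tilde{\mathbf{p}})|\le C\,\|\mathbf{p}-\tilde{\mathbf{p}}\|\,n^{2-\alpha}$ and the three analogues.

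Ingredient (U) is a routine consequence of writing $\phi_n$ as a linear combination of the four fundamental solutions $e^{\omega_k\rho x}$, $\rho=\lambda_n^{1/4}$, and using the Volterra integral equation for the perturbation from $p=q=0$; a Gronwall argument together with the $L^2$ bound $\|\mathbf{p}\|\le M$ and the $L^2$‑normalization yields uniform boundedness of $\phi_n$ and $\tilde\phi_n'$ for $n$ larger than some $P=P(M)$, and likewise for the Neumann family $\psi_n,\tilde\psi_n'$. The same asymptotic analysis gives $\lambda_n(\mathbf{p})=n^4\pi^4+O(n^2)$ and the gap $\lambda_{n+1}(\mathbf{p})-\lambda_n(\mathbf{p})=4\pi^4 n^3+O(n^2)$, uniformly on $B(\mathbf{0},M)$, so for $n>P$ the whole ball $B(\mathbf{0},M)$ lies in the open set where $\lambda_n$ (respectively $\mu_n$) is a simple isolated eigenvalue. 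Proposition~\ref{le2} therefore applies along the segment $\mathbf{p}_s=(1-s)\tilde{\mathbf{p}}+s\mathbf{p}$, giving
\[
\phi_n(x;\mathbf{p})-\phi_n(x;\tilde{\mathbf{p}})=\int_0^1 D_{\mathbf{p}}\phi_n(x;\mathbf{p}_s)[\mathbf{p}-\tilde{\mathbf{p}}]\,ds,
\]
where the Fréchet derivative is computed by standard perturbation theory:
\[
D_{\mathbf{p}}\phi_n(x;\mathbf{p})[\mathbf{h}]=-\!\sum_{m\ne n}\frac{\langle \mathfrak h'[\mathbf{h}]\phi_n(\cdot;\mathbf{p}),\phi_m(\cdot;\mathbf{p})\rangle_{L^2}}{\lambda_n(\mathbf{p})-\lambda_m(\mathbf{p})}\,\phi_m(x;\mathbf{p}),
\]
with $\mathfrak h'[\mathbf{h}]y=-(h_1y')'+h_2 y$ for $\mathbf{h}=(h_1,h_2)$. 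Using the asymptotic form $\phi_n(x;\mathbf{p})\approx\sqrt 2\sin(n\pi x)$ one shows $\langle \mathfrak h'[\mathbf{h}]\phi_n,\phi_m\rangle$ behaves like $nm$ times a Fourier coefficient of $h_1$ at the frequency $n\pm m$, and the denominator is $\gtrsim n^3|n-m|$ for $|n-m|\le n/2$ and $\gtrsim n^4$ otherwise; a Cauchy--Schwarz on the $\ell^2$ Fourier sequence of $\mathbf{h}$ then produces the $n^{2-\alpha}$ bound for any $\alpha<3$. The derivative family $\tilde\phi_n'$ is handled by the same formula differentiated in $x$ and divided by $n\pi$, while $\psi_n,\tilde\psi_n'$ are treated verbatim with the Dirichlet--Neumann eigenbasis.

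The main obstacle is making the perturbation series in (L) uniform in $\mathbf{p}\in B(\mathbf{0},M)$. A priori $B(\mathbf{0},M)$ need not be contained in $\mathbf{W}$, so for low $n$ the denominators $\lambda_n(\mathbf{p})-\lambda_m(\mathbf{p})$ can vanish; this is precisely what forces the restriction $n>P$ and the use of the uniform gap $|\lambda_n(\mathbf{p})-\lambda_m(\mathbf{p})|\ge c\,n^3|n-m|$ derived from the $O(n^2)$ eigenvalue asymptotics on all of $B(\mathbf{0},M)$. The second delicate point is extracting the rate $n^{2-\alpha}$ rather than the heuristic $n^{-1}$: because the coefficients are only $L^2$, the Fourier coefficients $\hat h_1(k)$ lie in $\ell^2$ but not $\ell^1$, and a direct pointwise summation of the perturbation series diverges; one has to pair each term with $\phi_m(x;\mathbf{p})$ and apply Cauchy--Schwarz on the tail $|m-n|>n^{\alpha-2}$ while summing the head absolutely, which costs the factor $n^{3-\alpha}\cdot n^{-1}=n^{2-\alpha}$ and explains the loss of $n^\epsilon$ relative to smooth coefficients.
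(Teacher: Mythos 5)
Your final assembly — the telescoping identity reducing the four-point product estimates to a uniform sup-norm bound (U) and a single-function Lipschitz estimate (L) — is exactly the paper's proof of Lemma~\ref{prop}. The divergence is in how (U) and (L) are obtained, and here the paper takes a different and more economical route: it quotes \eqref{a1} and \eqref{b1} from \cite{SC1} and proves only the $\psi$-half, deriving everything from the resolvent-difference bounds of \cite{Cau} (Proposition~\ref{le7}). Integrating $R(\mathbf{p},z)$ and $DR(\mathbf{p},z)D$ over the contour $\gamma_n$ of radius $rn^{\alpha}$ produces the rank-one projection kernels $\psi_n(x;\mathbf{p})\psi_n(y;\mathbf{p})$ and $\psi_n'(x;\mathbf{p})\psi_n'(y;\mathbf{p})$, whence the two-variable estimate $\sup_{x,y}|\psi_n(x;\mathbf{p}_1)\psi_n(y;\mathbf{p}_1)-\psi_n(x;\mathbf{p}_2)\psi_n(y;\mathbf{p}_2)|\le C\|\mathbf{p}_1-\mathbf{p}_2\|n^{2-\alpha}$ (Lemma~\ref{le1}); both (U) and (L) are then read off from this by taking $x=y$ and by dividing by the value at $x_n=(4n+2)^{-1}$, where $\psi_n^2(x_n;\mathbf{p})\ge 1/4$. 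Your proposal instead rebuilds (U) from a Volterra/Gronwall analysis and (L) from the Rayleigh--Schr\"odinger series for $D_{\mathbf{p}}\phi_n$. This can be made to work, but it re-derives much of the machinery of \cite{Sc,Cau} rather than using it, and for $L^2$ coefficients the term $(py')'$ is not a function, so the ``routine'' reduction to fundamental solutions $e^{\omega_k\rho x}$ must go through quasi-derivatives and is genuinely nontrivial.

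Two concrete gaps. First, ingredient (L) is \emph{false as stated}: a normalized eigenfunction is determined only up to sign, and replacing $\phi_n(\cdot;\tilde{\mathbf{p}})$ by its negative makes the difference of order $1$, not $O(n^{2-\alpha})$. The paper addresses this explicitly by fixing $\psi_n(x_n;\mathbf{p})>0$; your parallel-transport normalization $\langle\dot\phi_n,\phi_n\rangle=0$ along segments can serve the same purpose, but you must say so and check that the resulting sign choice is consistent across all four potentials entering the telescoping identity. Second, summing the perturbation series pointwise in $x$ requires $\sup_{m\ge 1}\sup_{\mathbf{p}\in B(\mathbf{0},M)}\|\phi_m(\cdot;\mathbf{p})\|_\infty<\infty$ \emph{including small $m$}, where the large-parameter asymptotics give nothing and $B(\mathbf{0},M)$ is not compact in $L^2$; this needs a separate argument (e.g.\ a form lower bound plus Sobolev embedding and a min--max upper bound on $\lambda_m(\mathbf{p})$). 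A minor point: your explanation of the exponent is off — with $|\lambda_n-\lambda_m|\gtrsim n^3|n-m|$ and Cauchy--Schwarz on the $\ell^2$ Fourier coefficients of $h_1$ against $|n-m|^{-1}$, the series converges absolutely and yields $O(n^{-1})$, which is stronger than the required $O(n^{2-\alpha})$ for $\alpha<3$; there is no divergence to repair and no head/tail splitting needed.
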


The estimates \eqref{a1} and \eqref{b1} have been already proved in \cite{SC1}, so we only need to prove \eqref{a2} and \eqref{b2}. For this purpose, it is necessary to give some auxiliary lemmas.

Using the explicit form \eqref{muna} for the eigenvalues $\mu_n(\mathbf{0})$, one can easily see that, for every $\alpha \in (2,3)$ and $r>0$, there exists $N=N(r, \alpha) \in \mathbb{N}$ such that
\begin{equation*}
\left(\mu_{n+1}(\mathbf{0})-r(n+1)^\alpha\right)-\left(\mu_n(\mathbf{0})+r n^\alpha\right)>0
\end{equation*}
for all $n \geq N$. Thus, for $r>0$ and $2<\alpha<3$, we consider the subset of the complex plane
\begin{equation*}
	C_{r, \alpha}= \left\{z \in \mathbb{C}:|z|<\mu_N(\mathbf{0})+r N^\alpha\right\}\cup\left(\bigcup_{n=N+1}^{\infty}\left\{z \in \mathbb{C}:\left|z-\mu_n(\mathbf{0})\right|<r n^\alpha\right\}\right).
\end{equation*}

The following result localizes the spectrum of $L_2(\mathbf{p})$ with respect to the set $C_{r, \alpha}$.

\begin{proposition}{\cite{Cau}}\label{th1}
	If $2<\alpha<3$ and $M>0$, then there exists $r = r(\alpha, M)>3$ such that
	$$
	\sigma(L_2(\mathbf{p})) \subset C_{r, \alpha}
	$$
	for all $\mathbf{p} \in B(\mathbf{0}, M)$.
\end{proposition}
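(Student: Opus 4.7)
My plan is to establish $\sigma(L_2(\mathbf{p}))\subset C_{r,\alpha}$ by showing that every $z$ outside $C_{r,\alpha}$ lies in the resolvent set of $L_2(\mathbf{p})$. I would employ a form-perturbation argument, comparing $L_2(\mathbf{p})$ with $L_2(\mathbf{0})$ via the bounded sesquilinear difference
\begin{equation*}
V[y,z]:=\mathfrak{q}_{(p,q)}[y,z]-\mathfrak{q}_{(0,0)}[y,z]=(py',z')+(qy,z),
\end{equation*}
whose form-bound on $\mathcal{Q}_2\times\mathcal{Q}_2$ is controlled by $M$ whenever $\mathbf{p}\in B(\mathbf{0},M)$. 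Both $L_2(\mathbf{0})$ and $L_2(\mathbf{p})$ are self-adjoint with compact resolvent, so their spectra are purely discrete. For $z\notin\sigma(L_2(\mathbf{0}))$, the factorization
\begin{equation*}
L_2(\mathbf{p})-z=(L_2(\mathbf{0})-z)\bigl[I+R_0(z)V\bigr],\qquad R_0(z):=(L_2(\mathbf{0})-z)^{-1},
\end{equation*}
interpreted with $R_0(z):\mathcal{Q}_2^{\ast}\to\mathcal{Q}_2$ and $V:\mathcal{Q}_2\to\mathcal{Q}_2^{\ast}$, reduces the problem to the operator-norm estimate $\|R_0(z)V\|_{\mathcal{Q}_2\to\mathcal{Q}_2}<1$.

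To implement this estimate, I would exploit the orthonormal eigenbasis $\psi_n(\cdot;\mathbf{0})=\sqrt{2}\sin((n+\tfrac12)\pi x)$ of $L_2(\mathbf{0})$ and compute the matrix elements via product-to-sum trigonometric identities:
\begin{equation*}
V[\psi_n,\psi_m]=\pi^2(n+\tfrac12)(m+\tfrac12)\bigl(\hat p_{|n-m|}+\hat p_{n+m+1}\bigr)+\bigl(\hat q_{|n-m|}-\hat q_{n+m+1}\bigr),
\end{equation*}
where $\hat f_j:=\int_0^1 f(x)\cos(j\pi x)\,dx$ satisfy the Bessel bound $\sum_{j\ge 0}(|\hat p_j|^2+|\hat q_j|^2)\le CM^2$. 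Expanding $y=\sum_n c_n\psi_n$ with $\|y\|_{\mathcal{Q}_2}^2\sim\sum_n(1+\mu_n(\mathbf{0}))|c_n|^2$ and applying Cauchy--Schwarz to $(R_0(z)Vy,\psi_m)=(\mu_m(\mathbf{0})-z)^{-1}\sum_n c_n V[\psi_n,\psi_m]$, one obtains the Schur-type bound
\begin{equation*}
\|R_0(z)V\|_{\mathcal{Q}_2\to\mathcal{Q}_2}^2\le\sum_{n,m\ge 1}\frac{(1+\mu_m(\mathbf{0}))\,|V[\psi_n,\psi_m]|^2}{(1+\mu_n(\mathbf{0}))\,|\mu_m(\mathbf{0})-z|^2}.
\end{equation*}
For $z\notin C_{r,\alpha}$, the bounds $|\mu_m(\mathbf{0})-z|\ge rm^\alpha$ for $m>N$ and $|\mu_m(\mathbf{0})-z|\ge rN^\alpha$ for $m\le N$ localize the denominators. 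Reindexing the double sum by Fourier frequency $j=|n-m|$ and $j=n+m+1$, applying Parseval to absorb the coefficients $\hat p_j,\hat q_j$, and bounding the remaining series in $n$, one can arrange the total to be at most $C(\alpha,M)/r^2$; then any $r=r(\alpha,M)>3$ large enough forces this below $1$.

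The main obstacle is this quantitative Schur estimate: the prefactors $(n+\tfrac12)(m+\tfrac12)$ in $V[\psi_n,\psi_m]$ fight against the denominators $(1+\mu_n(\mathbf{0}))\sim n^4$ and $(rm^\alpha)^2$, placing the sum at the borderline of summability. Exploiting the banded Fourier structure of the matrix (so that the $\ell^2$ decay of $\hat p_j,\hat q_j$ concentrates contributions on a ``diagonal plus stripes'' pattern) is essential; a crude operator-norm bound would not suffice. The assumption $\alpha>2$ is precisely what secures the convergence of the residual tails after this reindexing, while $\alpha<3$, inherited from the definition of $C_{r,\alpha}$, keeps the perturbation discs disjoint so that the lower bound $|\mu_m(\mathbf{0})-z|\ge rm^\alpha$ is a genuine localization rather than a vacuous statement.
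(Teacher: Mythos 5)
First, a point of reference: the paper does not prove this proposition at all --- it is imported verbatim from the cited work of Caudill, Perry and Schueller \cite{Cau} --- so your attempt can only be compared with the argument there. Your overall architecture (form perturbation of $L_2(\mathbf{0})$, the factorization $L_2(\mathbf{p})-z=(L_2(\mathbf{0})-z)\bigl[I+R_0(z)V\bigr]$, and the goal $\|R_0(z)V\|<1$ for $z\notin C_{r,\alpha}$) is the right skeleton, and your formula for $V[\psi_n,\psi_m]$ is correct. The gap is in the decisive quantitative step. The displayed Hilbert--Schmidt bound
\[
\sum_{n,m\ge 1}\frac{(1+\mu_m(\mathbf{0}))\,|V[\psi_n,\psi_m]|^2}{(1+\mu_n(\mathbf{0}))\,|\mu_m(\mathbf{0})-z|^2}
\]
cannot be made of size $C(\alpha,M)/r^2$; for generic $p\in L^2(0,1)$ it is infinite. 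Using $1+\mu_k(\mathbf{0})\sim k^4$ and the only localization you invoke, $|\mu_m(\mathbf{0})-z|\ge rm^\alpha$, the dominant $\hat p_{|n-m|}$ contribution is $r^{-2}\sum_{n,m}m^{6-2\alpha}n^{-2}|\hat p_{|n-m|}|^2$; already the single stripe $n=1$ contributes $r^{-2}\sum_m m^{6-2\alpha}|\hat p_{m-1}|^2$, which diverges for every $\alpha<3$ when, say, $\hat p_j\sim j^{-1/2}(\log j)^{-1}$ (still square-summable). The culprit is the unbounded weight $(1+\mu_m)/(1+\mu_n)\sim(m/n)^4$ on the stripes $m\gg n$, against which the only decay available is the bare $\ell^2$ summability of $(\hat p_j)$. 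Reindexing by Fourier frequency cannot repair this, because the divergence occurs within a single stripe; your own ``main obstacle'' paragraph identifies the danger correctly but then asserts, rather than proves, that it can be overcome.

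To close the gap one must change the estimate, not merely reorganize it. Symmetrizing (bounding $|L_2(\mathbf{0})-z|^{-1/2}V|L_2(\mathbf{0})-z|^{-1/2}$) removes the $(m/n)^4$ weight, but the near-diagonal sum then reduces to $\sum_n n^{4-2\alpha}$, which converges only for $\alpha>5/2$; to cover all of $2<\alpha<3$ one must additionally use that for $z\notin C_{r,\alpha}$ and $n$ away from the index of the nearest disc one has $|\mu_n(\mathbf{0})-z|\gtrsim|\mu_n(\mathbf{0})-\mu_m(\mathbf{0})|\sim\max(n,m)^3|n-m|$, which is far stronger than $rn^\alpha$. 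The proof in \cite{Cau} sidesteps the Fourier-coefficient bookkeeping entirely: it factors the perturbation through multiplication by $p$ and $q$, uses that multiplication by an $L^1$ function is bounded from $L^\infty$ to $L^1$ with norm $\|p\|_{L^1}\le M$, and controls the integral kernels of $R_0(z)$, $DR_0(z)$ and $DR_0(z)D$ in the $\|\cdot\|_{1,\infty}$ norm by scalar sums such as $\sum_n (n+\tfrac12)^2\pi^2/|\mu_n(\mathbf{0})-z|$, shown to be uniformly small for $z\notin C_{r,\alpha}$ once $r$ is large by combining the disc radii with the quartic growth and cubic separation of the $\mu_n(\mathbf{0})$; the Neumann series for the resolvent then converges. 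That is where the hypothesis $\alpha>2$ actually does its work. As written, your argument does not reach this point.
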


For $\mathbf{p} \in B(\mathbf{0}, M)$ and $z \notin C_{r, \alpha}$, let $R(\mathbf{p}, z)=(L_2(\mathbf{p})-z)^{-1}$. Proposition \ref{th1} guarantees the existence of $R(\mathbf{p}, z)$.
	
Define the contour
\begin{equation}\label{gam}
	\gamma_n=\left\{z \in \mathbb{C}:\left|z-\mu_n(\mathbf{0})\right|=r n^\alpha\right\}, \quad n>N(r, \alpha).
\end{equation}
Clearly, $\mbox{int}\,\gamma_n$
contains exactly one eigenvalue $\mu_n(\mathbf{p})$ for $\mathbf{p} \in B(\mathbf{0}, M)$ if $r$ is chosen according to Proposition~\ref{th1}.

\begin{proposition}\cite{Cau}\label{le7}
	If $2<\alpha<3, M>0, \mathbf{p_1}, \mathbf{p_2} \in  B(\mathbf{0}, M)$, $r = r(\alpha, M)$, and $n>N(r, \alpha)$, then there exists a positive constant $C$, depending only on $\alpha$ and $M$, such that
	
	(1) $\sup _{z \in \gamma_n}\|R(\mathbf{p_1}, z)-R(\mathbf{p_2}, z)\|_{1, \infty} \leq C\|\mathbf{p_1}-\mathbf{p_2}\| n^{2-2 \alpha}$\\ 
and
	
	(2) $\sup _{z \in \gamma_n}\|D R(\mathbf{p_1}, z) D-D R(\mathbf{p_2}, z) D\|_{1, \infty} \leq C\|\mathbf{p_1}-\mathbf{p_2}\| n^{4-2 \alpha}$.
\end{proposition}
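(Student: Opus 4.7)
The plan is to derive both estimates from the second resolvent identity, coupled with sharp bounds on the unperturbed resolvent $R(\mathbf{0}, z)$ on the contour $\gamma_n$, extended to all $\mathbf{p} \in B(\mathbf{0}, M)$ by a Neumann-series argument. Begin with
\[
R(\mathbf{p}_1, z) - R(\mathbf{p}_2, z) = R(\mathbf{p}_1, z)\bigl(L_2(\mathbf{p}_2) - L_2(\mathbf{p}_1)\bigr)R(\mathbf{p}_2, z),
\]
where, since the coefficients are only $L^2$, the operator difference must be read through the form \eqref{frakq}:
\[
(\mathfrak{q}_{\mathbf{p}_2} - \mathfrak{q}_{\mathbf{p}_1})[y, w] = ((p_2 - p_1)y', w') + ((q_2 - q_1)y, w),
\]
so that symbolically $L_2(\mathbf{p}_2) - L_2(\mathbf{p}_1) = -D^{\ast}(p_2 - p_1)D + (q_2 - q_1)$ with $D = d/dx$.

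The sharp bounds on the unperturbed resolvent come from its explicit spectral resolution via \eqref{psia} and \eqref{muna}:
\[
G_{\mathbf{0}}(x, y; z) = 2\sum_{m \ge 1}\frac{\sin((m + \tfrac{1}{2})\pi x)\sin((m + \tfrac{1}{2})\pi y)}{\mu_m(\mathbf{0}) - z}.
\]
For $z \in \gamma_n$ the denominator satisfies $|z - \mu_n(\mathbf{0})| = r n^{\alpha}$ and, for $m \ne n$, $|z - \mu_m(\mathbf{0})| \gtrsim |m - n|\max(m, n)^3$, the latter holding because $\alpha < 3$ and $n$ is large. Termwise summation then gives $\|R(\mathbf{0}, z)\|_{1, \infty} \lesssim n^{-\alpha}$ and $\|DR(\mathbf{0}, z)D\|_{1, \infty} \lesssim n^{2 - \alpha}$ uniformly in $z \in \gamma_n$, with analogous one-sided bounds for $DR(\mathbf{0}, z)$ and $R(\mathbf{0}, z)D^{\ast}$ obtained by inserting one extra factor $(m + \tfrac{1}{2})\pi$ inside the series. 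These bounds extend to $R(\mathbf{p}, z)$ for all $\mathbf{p} \in B(\mathbf{0}, M)$ and all $n$ greater than some $N = N(\alpha, M)$ via the Neumann series
\[
R(\mathbf{p}, z) = R(\mathbf{0}, z)\bigl[I + (L_2(\mathbf{0}) - L_2(\mathbf{p}))R(\mathbf{0}, z)\bigr]^{-1},
\]
which converges because the bracketed perturbation has small operator norm for large $n$.

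Inserting everything into the resolvent identity, the central form perturbation contributes $\|\mathbf{p}_1 - \mathbf{p}_2\|$ via Cauchy--Schwarz applied to the form, while the two flanking resolvents provide an overall decay of $n^{-2\alpha}$. The extra $n^2$ in statement~(1) originates from the two $D$'s inside the perturbation itself, each absorbed by a smoothing factor of the adjacent resolvent at the cost of one power of $n$. In statement~(2) the two outer $D$'s contribute a further $n^2$, yielding the claimed $n^{4 - 2\alpha}$.

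The main obstacle is reconciling the form-based interpretation of the perturbation --- which inherently carries two unbounded derivative factors --- with the requirement of pointwise kernel (i.e., $L^1 \to L^\infty$) bounds. One must factor each resolvent through intermediate Sobolev-type spaces so that every derivative is paired with a smoothing factor, trading one derivative for one unit of decay in $n$. Verifying that this bookkeeping produces precisely the exponents $2 - 2\alpha$ and $4 - 2\alpha$, and that the Neumann series converges uniformly in $\mathbf{p} \in B(\mathbf{0}, M)$ and $z \in \gamma_n$ with a threshold $N$ depending only on $\alpha$ and $M$, is the most technical part of the argument.
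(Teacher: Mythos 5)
The paper does not prove Proposition~\ref{le7}: it is imported directly from \cite{Cau}, so there is no internal proof to compare against. Your outline reproduces the argument of that reference: the second resolvent identity with the perturbation read through the form, explicit kernel bounds for the unperturbed resolvent on $\gamma_n$ (of order $n^{-\alpha}$ bare, $n^{1-\alpha}$ with one $D$, $n^{2-\alpha}$ with two), transfer of these bounds to $R(\mathbf{p},z)$ by a perturbation series, and the exponent bookkeeping $n^{-2\alpha}+n^{2-2\alpha}$ for (1) and $n^{2-2\alpha}+n^{4-2\alpha}$ for (2) is correct. The one step that is not literally right as written is the Neumann series $R(\mathbf{p},z)=R(\mathbf{0},z)[I+(L_2(\mathbf{0})-L_2(\mathbf{p}))R(\mathbf{0},z)]^{-1}$: the factor $(L_2(\mathbf{0})-L_2(\mathbf{p}))R(\mathbf{0},z)$ carries an outer $D^{\ast}$ and hence maps $L^2$ into $H^{-1}$ rather than into $L^2$, so this series cannot be summed in $\mathcal{B}(L^2)$; the standard repair (and the one used in \cite{Cau}) is to factor the perturbation as $B^{\ast}A$ with $A$ built from $|p|^{1/2}D$ and $|q|^{1/2}$ and to invert $I+AR(\mathbf{0},z)B^{\ast}$, whose norm is $O(Mn^{2-\alpha})\to 0$ by the kernel bounds. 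You flag this difficulty yourself, so the proposal is essentially the intended proof with that technical point left to be carried out.
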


Relying on Proposition \ref{le7}, we prove the following lemma:
\begin{lemma}\label{le1}
If $2<\alpha<3, M>0$, $\mathbf{p_1}, \mathbf{p_2} \in B(\mathbf{0}, M)$, $r = r(\alpha, M)$, and $n>N(r, \alpha)$, then there exists a positive constant $C$, depending only on $\alpha$ and $M$, such that
\begin{align}\label{eq3}
\sup _{x, y \in[0,1]}\left|\psi_n(x; \mathbf{p_1}) \psi_n(y ; \mathbf{p_1})-\psi_n(x ; \mathbf{p_2}) \psi_n(y ; \mathbf{p_2})\right| \leq C\|\mathbf{p_1}-\mathbf{p_2}\| n^{2-\alpha},
\end{align}
\begin{align}\label{eq4}
\sup _{x, y \in[0,1]}\left|\tilde{\psi}_n^{\prime}(x; \mathbf{p_1}) \tilde{\psi}_n^{\prime}(y ; \mathbf{p_1})-\tilde{\psi}_n^{\prime}(x ; \mathbf{p_2}) \tilde{\psi}_n^{\prime}(y ; \mathbf{p_2})\right| \leq C\|\mathbf{p_1}-\mathbf{p_2}\| n^{2-\alpha}.
\end{align}
\end{lemma}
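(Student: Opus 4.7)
The plan is to represent the products $\psi_n(x;\mathbf{p})\psi_n(y;\mathbf{p})$ and $\psi_n'(x;\mathbf{p})\psi_n'(y;\mathbf{p})$ as integral kernels of spectral projections, and then apply Proposition~\ref{le7} combined with a contour integral over $\gamma_n$. Since $L_2(\mathbf{p})$ is self-adjoint (the form \eqref{frakq} is symmetric for real-valued $p,q$) and $\mu_n(\mathbf{p})$ is simple for $\mathbf{p}\in B(\mathbf{0},M)$ and $n>N(r,\alpha)$, the Riesz projection
\[
P_n(\mathbf{p}) = -\frac{1}{2\pi i}\oint_{\gamma_n} R(\mathbf{p},z)\,dz
\]
is a rank-one orthogonal projection whose integral kernel equals $\psi_n(x;\mathbf{p})\psi_n(y;\mathbf{p})$. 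Interpreting the $\|\cdot\|_{1,\infty}$ norm as the $L^\infty$ norm of the integral kernel (the operator norm from $L^1$ to $L^\infty$), the difference of kernels on the left-hand side of \eqref{eq3} is exactly the kernel of $P_n(\mathbf{p}_1)-P_n(\mathbf{p}_2)$.

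First, for \eqref{eq3}, I would estimate
\[
\sup_{x,y\in[0,1]}\left|\psi_n(x;\mathbf{p}_1)\psi_n(y;\mathbf{p}_1) - \psi_n(x;\mathbf{p}_2)\psi_n(y;\mathbf{p}_2)\right| \leq \frac{1}{2\pi}\,|\gamma_n|\,\sup_{z\in\gamma_n}\|R(\mathbf{p}_1,z)-R(\mathbf{p}_2,z)\|_{1,\infty}.
\]
Since $|\gamma_n| = 2\pi r n^\alpha$ by \eqref{gam}, Proposition~\ref{le7}(1) yields the bound $C r n^\alpha \cdot \|\mathbf{p}_1-\mathbf{p}_2\| n^{2-2\alpha} = C'\|\mathbf{p}_1-\mathbf{p}_2\|n^{2-\alpha}$, as desired.

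Second, for \eqref{eq4}, I would note that the kernel of $D P_n(\mathbf{p}) D$, where $D = d/dx$ acts on each of the two variables separately, equals $\psi_n'(x;\mathbf{p})\psi_n'(y;\mathbf{p})$. Applying the same contour-integral argument together with Proposition~\ref{le7}(2) gives
\[
\sup_{x,y\in[0,1]}\left|\psi_n'(x;\mathbf{p}_1)\psi_n'(y;\mathbf{p}_1) - \psi_n'(x;\mathbf{p}_2)\psi_n'(y;\mathbf{p}_2)\right| \leq C r n^\alpha \cdot \|\mathbf{p}_1-\mathbf{p}_2\|n^{4-2\alpha} = C''\|\mathbf{p}_1-\mathbf{p}_2\|n^{4-\alpha}.
\]
Dividing both sides by $\big(n+\tfrac12\big)^2\pi^2 \sim n^2$ converts $\psi_n'$ to $\tilde{\psi}_n'$ on the left and gives the required $n^{2-\alpha}$ decay on the right.

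The main conceptual step is recognizing that the kernel interpretation of $\|\cdot\|_{1,\infty}$ converts an operator-norm bound directly into a pointwise bound on the product of eigenfunctions; after that, everything reduces to contour length times the supremum along $\gamma_n$. No additional obstacle is expected, since the factorization of the kernel into a product of eigenfunctions is automatic from self-adjointness and simplicity of $\mu_n(\mathbf{p})$, and the normalization $\tilde{\psi}_n' = \psi_n'/((n+\tfrac12)\pi)$ is precisely what matches the $n^2$ gap between Proposition~\ref{le7}(1) and (2).
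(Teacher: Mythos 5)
Your proposal is correct and follows essentially the same route as the paper: both identify $\psi_n(x;\mathbf{p})\psi_n(y;\mathbf{p})$ and $\psi_n'(x;\mathbf{p})\psi_n'(y;\mathbf{p})$ as kernels of the contour integrals $\frac{1}{2\pi i}\oint_{\gamma_n}R(\mathbf{p},z)\,dz$ and $\frac{1}{2\pi i}\oint_{\gamma_n}DR(\mathbf{p},z)D\,dz$, bound the kernel difference by the $\|\cdot\|_{1,\infty}$ norm of the operator difference, and combine the contour length $2\pi rn^{\alpha}$ with Proposition~\ref{le7} (parts (1) and (2) respectively), the extra $n^2$ in part (2) being absorbed by the normalization of $\tilde{\psi}_n'$. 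No gaps.
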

\begin{proof}

It is known that the kernel of the integral operator
	$$\frac{1}{2\pi i}\oint_{\gamma_n}R(\mathbf{p},z)dz$$
is $\psi_n(x;\mathbf{p})\psi_n(y;\mathbf{p})$ and that the kernel of the integral operator
$$\frac{1}{2\pi i}\oint_{\gamma_n}D R(\mathbf{p},z) Ddz$$ is $\psi_n'(x;\mathbf{p})\psi_n'(y;\mathbf{p})$ (see \cite{Sc} Section 6.3).
For $2<\alpha<3, M>0$, $\mathbf{p_1}, \mathbf{p_2} \in B(\mathbf{0}, M)$ and $z \in \gamma_n$, where $\gamma_n$ is defined by equation \eqref{gam}, by Proposition \ref{le7} we get
\begin{align*}
	&\sup _{x, y \in[0,1]}\left|\psi_n(x; \mathbf{p_1}) \psi_n(y ; \mathbf{p_1})-\psi_n(x ; \mathbf{p_2}) \psi_n(y; \mathbf{p_2})\right|\\&\leq
	\bigg{\|}\frac{1}{2\pi i} \oint_{\gamma_n}R(\mathbf{p_1},z)dz-\frac{1}{2\pi i} \oint_{\gamma_n}R(\mathbf{p_2},z)dz\bigg{\|}_{1,\infty}\\&\leq \frac{1}{2\pi}\oint_{\gamma_n}\|R(\mathbf{p_1},z)-R(\mathbf{p_2},z)\|_{1,\infty}dz\\&\leq C\|\mathbf{p_1}-\mathbf{p_2}\| n^{2-\alpha},
\end{align*}
where $C$ is a positive constant, which proves \eqref{eq3}.
The relation \eqref{eq4} is obtained similarly.
\end{proof}

Lemma \ref{le1} allows us to prove the following assertion:

\begin{lemma}\label{bound}
		If $2<\alpha<3, M>0$, $\mathbf{p} \in B(\mathbf{0}, M)$, $r = r(\alpha, M)$, and $n>N(r, \alpha)$, then there is a constant $Q$, depending only on $\alpha$ and $M$, such that
	\begin{equation}\label{eq14}
		\sup _{x \in[0,1]}|\psi_n(x; \mathbf{p})| \leq Q  ,
	\end{equation}
	\begin{equation}\label{eq100}
		\sup _{x \in[0,1]}|\tilde{\psi}_n^{\prime}(x; \mathbf{p})|\leq Q.
	\end{equation}
\end{lemma}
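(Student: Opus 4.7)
The plan is to derive Lemma \ref{bound} as an immediate corollary of Lemma \ref{le1}, by specializing one coefficient vector to $\mathbf{0}$ and setting $y=x$ in the two-variable estimates already obtained.

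First I would apply \eqref{eq3} with $\mathbf{p_1} = \mathbf{p}$ and $\mathbf{p_2} = \mathbf{0}$, both of which lie in $B(\mathbf{0}, M)$. Putting $y = x$ inside the supremum yields
\begin{equation*}
\sup_{x \in [0,1]}\bigl|\psi_n(x;\mathbf{p})^2 - \psi_n(x;\mathbf{0})^2\bigr| \leq C\|\mathbf{p}\|\,n^{2-\alpha} \leq CM\,n^{2-\alpha}.
\end{equation*}
From \eqref{psia} we have the explicit formula $\psi_n(x;\mathbf{0}) = \sqrt{2}\sin((n+\tfrac{1}{2})\pi x)$, so $\psi_n(x;\mathbf{0})^2 \leq 2$ for all $x \in [0,1]$. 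Combining these by the triangle inequality,
\begin{equation*}
\psi_n(x;\mathbf{p})^2 \leq 2 + CM\,n^{2-\alpha}.
\end{equation*}
Since $\alpha > 2$, the sequence $n^{2-\alpha}$ is bounded (and actually decreasing) on $n > N(r,\alpha)$, so the right-hand side is bounded by a constant $Q^2$ depending only on $\alpha$ and $M$. Taking square roots gives \eqref{eq14}.

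Next I would repeat the same argument using \eqref{eq4} in place of \eqref{eq3}. By differentiating \eqref{psia} one has $\tilde{\psi}_n'(x;\mathbf{0}) = \sqrt{2}\cos((n+\tfrac{1}{2})\pi x)$, which is again bounded by $\sqrt{2}$ uniformly in $x$ and $n$. Setting $y = x$ in \eqref{eq4} with $\mathbf{p_2} = \mathbf{0}$ yields
\begin{equation*}
\sup_{x \in [0,1]}\bigl|\tilde{\psi}_n'(x;\mathbf{p})^2 - \tilde{\psi}_n'(x;\mathbf{0})^2\bigr| \leq CM\,n^{2-\alpha},
\end{equation*}
which combined with the trivial bound on $\tilde{\psi}_n'(x;\mathbf{0})^2$ produces \eqref{eq100}, possibly after enlarging $Q$.

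There is no real obstacle: the work is entirely contained in the two-parameter estimates of Lemma \ref{le1}, and the only ingredients that still need to be put in are (i) the explicit formula \eqref{psia} which makes the unperturbed eigenfunctions uniformly bounded, and (ii) the fact that $n^{2-\alpha}$ is bounded for $n > N(r,\alpha)$ because $2 - \alpha < 0$. The constant $Q$ may be chosen as $Q = \sqrt{2 + CM\,N(r,\alpha)^{2-\alpha}}$, depending only on $\alpha$ and $M$ through $C$, $M$, and $N(r,\alpha)$.
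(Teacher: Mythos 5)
Your proposal is correct and follows essentially the same route as the paper: specialize \eqref{eq3} and \eqref{eq4} to $\mathbf{p_2}=\mathbf{0}$, set $y=x$, use the explicit unperturbed eigenfunctions \eqref{psia} to bound $\psi_n(x;\mathbf{0})^2$ and $\tilde{\psi}_n'(x;\mathbf{0})^2$ by $2$, and absorb $CM\,n^{2-\alpha}\le CM\,N^{2-\alpha}$ into the constant $Q$. The paper's choice $Q=(2+2CMN^{2-\alpha})^{1/2}$ matches yours up to an immaterial factor in the constant.
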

\begin{proof}
In view of \eqref{psia}, we have
$$
\psi_n(x;\mathbf{0})=\sqrt{2}\sin\bigg(\bigg(n+\frac{1}{2}\bigg)\pi x\bigg).
$$
	Letting $\mathbf{p_1}=\mathbf{p},\mathbf{p_2}=\mathbf{0}$ and $x=y$ in \eqref{eq3}, we get
	\begin{equation*}
		\bigg|\psi^2_n(x; \mathbf{p})-{2}\sin^2\bigg(\bigg(n+\frac{1}{2}\bigg)\pi x\bigg)\bigg|\leq C\|\mathbf{p}\| n^{2-\alpha}.
	\end{equation*}
	Hence 
	\begin{equation}\label{111}
		\bigg|\psi_n^2(x; \mathbf{p})\bigg|\leq 2+2CM n^{2-\alpha}\leq  2+2CM N^{2-\alpha}.
	\end{equation}
	Letting $\mathbf{p_1}=\mathbf{p}, \mathbf{p_2}=\mathbf{0}$ and $x=y$ in  \eqref{eq4}, we get
\begin{equation*}\label{eq15}
	|\tilde{[\psi]}_n^{\prime 2}(x; \mathbf{p})-{2}\cos^2\bigg(\bigg(n+\frac{1}{2}\bigg)\pi x\bigg)\bigg|\leq C\|\mathbf{p}\| n^{2-\alpha}.
\end{equation*}
Hence
\begin{equation}\label{112}
	\bigg|\tilde{[\psi]}_n^{\prime 2}(x;\mathbf{p})\bigg|\leq 2+2CM n^{2-\alpha}\leq2+2CM N^{2-\alpha}.
\end{equation}
Put $Q=(2+2CM N^{2-\alpha})^{1/2}$. Then, \eqref{111} and \eqref{112} yield the claim.
\end{proof}

\begin{lemma}\label{le4}
	If $2<\alpha<3, M>0$, $\mathbf{p_1}, \mathbf{p_2} \in B(\mathbf{0}, M)$, then there is a positive integer $P$, depending only on $\alpha$ and $M$, such that, for $n>P$, there hold
	\begin{equation}\label{eq1}
	\sup _{x \in[0,1]}\left|\psi_n(x; \mathbf{p_1})-\psi_n(x ; \mathbf{p_2})\right| \leq C\|\mathbf{p_1}-\mathbf{p_2}\| n^{2-\alpha},
	\end{equation}
	\begin{equation}\label{eq2}
	\sup _{x \in[0,1]}\left|\tilde{\psi}_n^{\prime}(x; \mathbf{p_1})-\tilde{\psi}_n^{\prime}(x ; \mathbf{p_2})\right| \leq C\|\mathbf{p_1}-\mathbf{p_2}\| n^{2-\alpha},
	\end{equation}
	where $C$ is a positive constant depending only on $\alpha$ and $M$.
\end{lemma}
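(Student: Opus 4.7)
The overall plan is to deduce the single-eigenfunction estimates \eqref{eq1}--\eqref{eq2} from the product estimates \eqref{eq3}--\eqref{eq4} of Lemma~\ref{le1} together with the uniform bounds of Lemma~\ref{bound}. The main idea is a standard ``polarization'' trick: evaluate the product $\psi_n(x;\mathbf{p}_j)\psi_n(y;\mathbf{p}_j)$ at a convenient base point $y=y_0=y_0(n)$ at which $\psi_n(y_0;\mathbf{p})$ is uniformly bounded away from $0$, and then solve for $\psi_n(x;\mathbf{p}_1)-\psi_n(x;\mathbf{p}_2)$.

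To establish \eqref{eq1}, I would choose $y_0(n)=1/(2n+1)$, so that $\psi_n(y_0;\mathbf{0})=\sqrt{2}\sin(\pi/2)=\sqrt{2}$. Specializing \eqref{eq3} to $\mathbf{p}_2=\mathbf{0}$ and $x=y=y_0$ yields $|\psi_n^2(y_0;\mathbf{p})-2|\le CM\,n^{2-\alpha}$, hence $\psi_n^2(y_0;\mathbf{p})\ge 1$ for all $n$ larger than some $P=P(\alpha,M)$. Because $\psi_n(\cdot;\mathbf{p})$ depends real-analytically on $\mathbf{p}$ on $B(\mathbf{0},M)$ (Proposition~\ref{le2}, combined with the fact that Proposition~\ref{th1} makes $\mu_n$ a simple eigenvalue throughout the ball for $n>N(r,\alpha)$) and since its value at $y_0$ never crosses zero, its sign is constant on the connected set $B(\mathbf{0},M)$ and equal to the sign of $\sqrt{2}>0$, so in fact $\psi_n(y_0;\mathbf{p})\ge 1$. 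Next I would use the algebraic identity
\begin{equation*}
\psi_n(y_0;\mathbf{p}_2)\bigl[\psi_n(x;\mathbf{p}_1)-\psi_n(x;\mathbf{p}_2)\bigr]=\bigl[\psi_n(x;\mathbf{p}_1)\psi_n(y_0;\mathbf{p}_1)-\psi_n(x;\mathbf{p}_2)\psi_n(y_0;\mathbf{p}_2)\bigr]-\psi_n(x;\mathbf{p}_1)\bigl[\psi_n(y_0;\mathbf{p}_1)-\psi_n(y_0;\mathbf{p}_2)\bigr].
\end{equation*}
The first bracket is controlled directly by \eqref{eq3}; the factor $\psi_n(x;\mathbf{p}_1)$ is controlled by \eqref{eq14} of Lemma~\ref{bound}; and the remaining difference at $y_0$ is bounded through
$$|\psi_n(y_0;\mathbf{p}_1)-\psi_n(y_0;\mathbf{p}_2)|=\frac{|\psi_n^2(y_0;\mathbf{p}_1)-\psi_n^2(y_0;\mathbf{p}_2)|}{\psi_n(y_0;\mathbf{p}_1)+\psi_n(y_0;\mathbf{p}_2)}\le \tfrac{C}{2}\|\mathbf{p}_1-\mathbf{p}_2\|\,n^{2-\alpha},$$
using \eqref{eq3} in the numerator and the lower bound $\ge 2$ in the denominator. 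Dividing through by $\psi_n(y_0;\mathbf{p}_2)\ge 1$ gives \eqref{eq1} with a new constant $C=C(\alpha,M)$.

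The proof of \eqref{eq2} is structurally identical, but now with base point $y_0=0$, at which $\tilde{\psi}_n'(0;\mathbf{0})=\sqrt{2}\cos(0)=\sqrt{2}$, and with \eqref{eq4} and \eqref{eq100} playing the roles of \eqref{eq3} and \eqref{eq14}. The main obstacle I anticipate is precisely the sign-consistency step: the eigenfunction normalization leaves a sign ambiguity, and the argument requires a uniform positive lower bound rather than a mere lower bound on $|\psi_n(y_0;\mathbf{p})|$. This is handled by real-analyticity of $\psi_n(\cdot;\mathbf{p})$ in $\mathbf{p}$ on $B(\mathbf{0},M)$ together with connectedness and the non-vanishing of $\psi_n(y_0;\mathbf{p})$ on that ball; once that is in place, everything reduces to the routine estimates described above.
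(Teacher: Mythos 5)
Your proof is correct and follows the same core strategy as the paper: both deduce \eqref{eq1}--\eqref{eq2} from the product bounds \eqref{eq3}--\eqref{eq4} of Lemma~\ref{le1} by anchoring at a base point $y_0(n)$ where the eigenfunction (resp.\ its normalized derivative) is bounded away from zero, then polarizing and using the uniform bounds of Lemma~\ref{bound}; your algebraic identity and the quotient bound for $|\psi_n(y_0;\mathbf{p}_1)-\psi_n(y_0;\mathbf{p}_2)|$ are exactly the paper's \eqref{eq16}, \eqref{eq7}, \eqref{eq11} in slightly different clothing (the paper uses $x_n=(4n+2)^{-1}$, where $\psi_n^2(x_n;\mathbf{0})=1$, rather than your $(2n+1)^{-1}$; both work). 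The one step where you genuinely diverge is the sign question for $\tilde{\psi}_n'$ at the base point. The paper fixes the sign of $\psi_n(\cdot;\mathbf{p}_1)$ once, by decreeing $\psi_n(x_n;\mathbf{p}_1)>0$, and then \emph{transfers} positivity to the derivative by elementary calculus: the mean value theorem on $[0,x_n]$ produces a point $z_n$ with $\tilde{\psi}_n'(z_n;\mathbf{p}_1)=\tfrac{4}{\pi}\psi_n(x_n;\mathbf{p}_1)>0$, and the lower bound $[\tilde{\psi}_n']^2\geq\tfrac14$ on $[0,x_n]$ from \eqref{eq4} plus the intermediate value theorem forces $\tilde{\psi}_n'>\tfrac12$ on that whole interval. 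You instead invoke real-analytic dependence on $\mathbf{p}$ and connectedness of the ball to claim a globally consistent positive branch. That route is workable but less self-contained here: Proposition~\ref{le2} asserts analyticity only at points of $\mathbf{W}$, and $B(\mathbf{0},M)$ need not lie in $\mathbf{W}$ (only the eigenvalues of index $n>N(r,\alpha)$ are guaranteed simple there), so you must additionally justify the existence of a single global analytic/continuous selection of the $n$-th eigenfunction on the ball and check that the branch you use for \eqref{eq2} is the same one whose sign you fixed for \eqref{eq1}. The paper's MVT/IVT argument buys exactly this sign consistency with no appeal to dependence on $\mathbf{p}$ beyond Lemma~\ref{le1}; your version buys a shorter write-up at the cost of that extra (fillable) justification.
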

\begin{proof}
	We first prove the estimate \eqref{eq1}. Let $r = r(\alpha, M)$ and $n > N(r, \alpha)$.
	Putting $\mathbf{p_2}=\mathbf{0}$ and $x=y=x_n=(4 n+2)^{-1}$ in \eqref{eq3}, we have
	$$
	\left|\psi_n^2\left(x_n; \mathbf{p_1}\right)-1\right| \leq C\|\mathbf{p_1}\|n^{2-\alpha}\leq 2CMn^{2-\alpha}.
	$$
Then, there exists $P>N(r, \alpha)$ such that, for $n\geq P$,
	$$
	\left|\psi_n^2\left(x_n; \mathbf{p_1}\right)-1\right| \leq \frac{3}{4}.
	$$
	Hence
	$$
	 \psi_n^2\left(x_n; \mathbf{p_1}\right)\geq \frac{1}{4}.
	$$
		
	To this point, there has been an ambiguity in the definition of $\psi_n(x ;\mathbf{p_1})$ by a complex factor of modulus one. We now choose $\psi_n(x ; \mathbf{p_1})$ so that $\psi_n\left(x_n ; \mathbf{p_1}\right)>0$. With this choice of $\psi_n(x ; \mathbf{p_1})$, we get
	\begin{equation}\label{eq13}
	 \psi_n\left(x_n ; \mathbf{p_1}\right)\geq \frac{1}{2} .
	\end{equation}
Setting $y=x_n$ in \eqref{eq3}, we obtain
	\begin{align}\label{eq12}
		\left|\psi_n(x ;\mathbf{p_1}) \psi_n\left(x_n ; \mathbf{p_1}\right)-\psi_n(x ; \mathbf{p_2}) \psi_n\left(x_n ; \mathbf{p_2}\right)\right|\leq C\|\mathbf{p_1}-\mathbf{p_2}\| n^{2-\alpha}.
	\end{align}
	Inequalities \eqref{eq13} and \eqref{eq12} yield that
	\begin{align}\label{eq16}
		\left|\psi_n(x ; \mathbf{p_1})-\psi_n(x ; \mathbf{p_2}) \frac{\psi_n\left(x_n ; \mathbf{p_2}\right)}{\psi_n\left(x_n ; \mathbf{p_1}\right)}\right| \leq C_1\|\mathbf{p_1}-\mathbf{p_2}\| n^{2-\alpha}.
	\end{align}
It is easy to verify that, for $a, b \geq 1 / 2$,
\begin{equation}\label{eq7}
	|a-b| \leq\left|a^2-b^2\right|.
\end{equation}
Applying inequality \eqref{eq7} to \eqref{eq3} with $x=y=x_n$ gives
\begin{equation}\label{eq11}
	\left|\psi_n\left(x_n; \mathbf{p_1}\right)-\psi_n\left(x_n ; \mathbf{p_2}\right)\right| \leq C\|\mathbf{p_1}-\mathbf{p_2}\| n^{2-\alpha}.
\end{equation}
By  \eqref{eq14}, \eqref{eq13}, \eqref{eq16} and \eqref{eq11}, we have
\begin{align*}
	&\left|\psi_n(x ; \mathbf{p_1})-\psi_n(x ; \mathbf{p_2})\right| \\&\leq\left|\psi_n(x ; \mathbf{p_1})-\psi_n(x ;\mathbf{p_2}) \frac{\psi_n\left(x_n ; \mathbf{p_2}\right)}{\psi_n\left(x_n ; \mathbf{p_1}\right)}\right|+\nonumber\left|\psi_n(x ; \mathbf{p_2})\frac{\psi_n\left(x_n ; \mathbf{p_2}\right)-\psi_n\!(x_n ; \mathbf{p_1})}{\psi_n\left(x_n ; \mathbf{p_1}\right)}\right|\\&\leq C_1\|\mathbf{p_1}-\mathbf{p_2}\| n^{2-\alpha}+2CQ\|\mathbf{p_1}-\mathbf{p_2}\| n^{2-\alpha}\\&\leq C_2\|\mathbf{p_1}-\mathbf{p_2}\| n^{2-\alpha},
\end{align*}
 which proves \eqref{eq1}.
	
	Now, consider \eqref{eq2}. By the mean value theorem, there exists $z_n \in$ $\left(0, \frac{1}{4n+2}\right)$ such that
	
	$$
	\begin{aligned}
		\tilde{\psi}_n^{\prime}\left(z_n, \mathbf{p_1}\right) & =\frac{\tilde{\psi}_n\left(x_n ; \mathbf{p_1}\right)-\tilde{\psi}_n(0 ; \mathbf{p_1})}{1 / (4n+2)}=\frac{4}{\pi} \psi_n\left(x_n ; \mathbf{p_1}\right),
	\end{aligned}
	$$
	where $\tilde{\psi}_n(0 ; \mathbf{p_1})=0$ by the  boundary conditions \eqref{boud2}. Taking \eqref{eq13} into account, we get
	$$
	\psi_n\left(x_n ; \mathbf{p_1}\right)>\frac{1}{2},
	$$
	for all $n>P$. Then
	$$
	\tilde{\psi}_n^{\prime}\left(z_n ; \mathbf{p_1}\right)>\frac{2}{\pi}>\frac{1}{2} .
	$$
Letting $\mathbf{p_2}=\mathbf{0}$ and $x=y$ in \eqref{eq4}, we obtain
\begin{equation}\label{eq5}
	\bigg|\left[\tilde{\psi}_n^{\prime}\right]^2(x ; \mathbf{p_1})-{2} \cos^2 \bigg(\bigg(n+\frac{1}{2}\bigg) \pi x\bigg)\bigg| \leq C\|\mathbf{p_1}\| n^{2-\alpha}\leq 2MC n^{2-\alpha}.
\end{equation}
Therefore, for any $x \in\bigg[0,\frac{1}{4n+2}\bigg]$,
we have
	\begin{equation}\label{eq6}
		{\left[\tilde{\psi}_n^{\prime}\right]^2(x ; \mathbf{p_1}) } \geq {2} \cos^2 \bigg(\bigg(n+\frac{1}{2}\bigg) \pi x\bigg)-2MCn^{2-\alpha} \geq 1-2MCn^{2-\alpha}\geq\frac{1}{4}
	\end{equation}
	for any $n>P$,
	where we use $
	 {2} \cos^2 \big(\big(n+\frac{1}{2}\big) \pi x\big)\geq 1$, $x \in\big[0,\frac{1}{4n+2}\big].$
	
	We know that $\tilde{\psi}_n^{\prime}\left(z_n ; \mathbf{p_1}\right)>0$ and $z_n \in(0,\frac{1}{4n+2})$. Suppose that there is a point $y_n \in[0,\frac{1}{4n+2}]$ such that $\tilde{\psi}_n^{\prime}\left(y_n ; \mathbf{p_1}\right) \leq 0$. Then, by the intermediate value
	theorem, there must exist a point $w_n \in[0,\frac{1}{4n+2}]$ such that $\tilde{\psi}_n^{\prime}\left(w_n ; \mathbf{p_1}\right)=0$. This is a contradiction with \eqref{eq6}. Thus, we conclude from \eqref{eq6} that
\begin{equation}\label{eq10}
	\tilde{\psi}_n^{\prime}(x ; \mathbf{p_1})>\frac{1}{2}, \quad \forall x \in\bigg[0,\frac{1}{4n+2}\bigg].
\end{equation}
Setting $y=x_n$ into \eqref{eq4}, we have
\begin{align}\label{eq9}
	\left|\tilde{\psi}_n^{\prime}(x ;\mathbf{p_1}) \tilde{\psi}_n^{\prime}\left(x_n ; \mathbf{p_1}\right)-\tilde{\psi}_n^{\prime}(x ; \mathbf{p_2}) \tilde{\psi}_n^{\prime}\left(x_n ; \mathbf{p_2}\right)\right| \leq C\|\mathbf{p_1}-\mathbf{p_2}\| n^{2-\alpha},
\end{align}
Inequalities \eqref{eq10} and \eqref{eq9} yield that
\begin{equation}\label{ai}
\left\lvert\, \tilde{\psi}_n^{\prime}(x ; \mathbf{p_1})-\tilde{\psi}_n^{\prime}\left(x ; \mathbf{p_2}\right) \frac{\tilde{\psi}_n^{\prime}\left(x_n ; \mathbf{p_2}\right)}{\tilde{\psi}_n^{\prime}\left(x_n ; \mathbf{p_1}\right)} \right\rvert\ \!\!\!\leq\! C_1\|\mathbf{p_1}-\mathbf{p_2}\| n^{2-\alpha}.
\end{equation}
Applying inequality \eqref{eq7} to \eqref{eq4} with $x=y=x_n$ gives
\begin{equation}\label{eq8}
	\left|\tilde{\psi}_n^{\prime}\left(x_n ; \mathbf{p_1}\right)-\tilde{\psi}_n^{\prime}\left(x_n ; \mathbf{p_2}\right)\right| \leq C\|\mathbf{p_1}-\mathbf{p_2}\| n^{2-\alpha}.
\end{equation}
By  \eqref{eq100}, \eqref{eq10}, \eqref{ai} and \eqref{eq8}, we get
\begin{align*}
	&\left|\tilde{\psi}_n^{\prime}(x ; \mathbf{p_1})-\tilde{\psi}_n^{\prime}(x ; \mathbf{p_2})\right| \\&\leq\left|\tilde{\psi}_n^{\prime}(x ; \mathbf{p_1})-\tilde{\psi}_n^{\prime}\left(x ; \mathbf{p_2}\right) \frac{\tilde{\psi}_n^{\prime}\left(x_n ; \mathbf{p_2}\right)}{\tilde{\psi}_n^{\prime}\left(x_n ; \mathbf{p_1}\right)}\right|+\nonumber\left|\tilde{\psi}_n^{\prime}(x ; \mathbf{p_2})\frac{\tilde{\psi}_n^{\prime}\left(x_n ; \mathbf{p_2}\right)-\tilde{\psi}_n^{\prime}(x_n ; \mathbf{p_1})}{\tilde{\psi}_n^{\prime}\left(x_n ; \mathbf{p_1}\right)}\right|\\&\leq C_1\|\mathbf{p_1}-\mathbf{p_2}\| n^{2-\alpha}+2CQ\|\mathbf{p_1}-\mathbf{p_2}\| n^{2-\alpha}\\&\leq C_2\|\mathbf{p_1}-\mathbf{p_2}\| n^{2-\alpha},
\end{align*}
 which proves \eqref{eq2}.
\end{proof}

\begin{proof}[Proof of Lemma \ref{prop}]
	By Lemmas \ref{bound} and \ref{le4}, it can be deduced that
\begin{align*}
			&\left| \psi_n(x; \mathbf{p_1}) \psi_n(x; \mathbf{p_2})-\psi_n\left(x ; \mathbf{p_3}\right) \psi_n\left(x ; \mathbf{p_4}\right)\right|\\&\leq|\psi_n(x; \mathbf{p_1})(\psi_n(x; \mathbf{p_2})-\psi_n(x ; \mathbf{p_1}))|
			+|\psi_n(x; \mathbf{p_1}) (\psi_n(x; \mathbf{p_1})-\psi_n(x ; \mathbf{p_3}))|+\\&\quad\;|\psi_n(x; \mathbf{p_3})(\psi_n(x; \mathbf{p_1})-\psi_n(x ;\mathbf{p_4}))|
			\\&\leq C\left(\|\mathbf{p_1}-\mathbf{p_2}\|+\|\mathbf{p_1}-\mathbf{p_3}\|+\|\mathbf{p_1}-\mathbf{p_4}\|
			\right) \!n^{2-\alpha},
\end{align*}
Analogously, we have
\begin{align*}
		&\left| \tilde{\psi}'_n(x; \mathbf{p_1} \tilde{\psi}'_n(x; \mathbf{p_2})-\tilde{\psi}'_n\left(x ; \mathbf{p_3}\right) \tilde{\psi}'_n\left(x ; \mathbf{p_4}\right)\right|\\&\leq |\tilde{\psi}'_n\!(x; \mathbf{p_1})(\tilde{\psi}'_n(x; \mathbf{p_2})-\tilde{\psi}'_n(x ;\mathbf{p_1}))|
		+|\tilde{\psi}'_n(x; \mathbf{p_1})(\tilde{\psi}'_n(x; \mathbf{p_1})-\tilde{\psi}'_n(x;\mathbf{p_3}))|+\\&\quad\;|\tilde{\psi}'_n(x; \mathbf{p_3})(\tilde{\psi}'_n(x; \mathbf{p_1})-\tilde{\psi}'_n(x;\mathbf{p_4}))|
		\\&\leq C\!\left(\|\mathbf{p_1}-\mathbf{p_2}\|+\|\mathbf{p_1}-\mathbf{p_3}\|+\|\mathbf{p_1}-\mathbf{p_4}\|
		\right) \!n^{2-\alpha},
\end{align*}
which proves the conclusions \eqref{a2} and \eqref{b2}.
\end{proof}

\section{Proof of Theorem \ref{thm}} \label{sec:proof}

The proof of Theorem \ref{thm} hinges on the following key observation.

\begin{lemma}\label{riesz}
	Let  $\mathbf{a}$ be a constant and $\mathbf{a}\in \mathbf{W}$, then there exists $\varepsilon>0$ such that, if $\mathbf{p_1}$, $\mathbf{p_2}\in B(\mathbf{a},\varepsilon)$, the systems
 $$\big\{1\big\}\cup\bigg\{\sqrt{2}(1-\phi_n(x;\mathbf{p_1}){\phi}_n(x;\mathbf{p_2})),\sqrt{2}(1-\psi_n(x;\mathbf{p_1}){\psi}_n(x;\mathbf{p_2}))\bigg\}_{n\geq 1}$$ and $$\big\{1\big\}\cup\bigg\{2\sqrt{2}\bigg(\frac{\phi_n'(x;\mathbf{p_1})\phi_n'(x;\mathbf{p_2})}{2n^2\pi^2}-\frac{1}{2}\bigg),2\sqrt{2}\bigg(\frac{\psi_n'(x;\mathbf{p_1})\psi'_n(x;\mathbf{p_2})}{2(n+\frac{1}{2})^2\pi^2}-\frac{1}{2}\bigg)\bigg\}_{n\geq 1}$$ are Riesz bases in $L^2(0,1)$.
\end{lemma}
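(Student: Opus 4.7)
The plan is to exhibit both systems as small $L^2(0,1)$-perturbations of the standard orthonormal cosine basis $\{1\}\cup\{\sqrt{2}\cos(k\pi x)\}_{k\ge 1}$, and then apply Proposition~\ref{le3} in the orthonormal form recorded in the Remark that follows it. At the base point $\mathbf{p_1}=\mathbf{p_2}=\mathbf{a}$, the explicit eigenfunctions combined with the double-angle identities give
\[
\sqrt{2}\bigl(1-\phi_n(x;\mathbf{a})^2\bigr)=\sqrt{2}\cos(2n\pi x),\qquad \sqrt{2}\bigl(1-\psi_n(x;\mathbf{a})^2\bigr)=\sqrt{2}\cos((2n\pm 1)\pi x),
\]
and an analogous computation with $\tilde\phi_n'(x;\mathbf{a})^2=\cos^2(n\pi x)$ and $\tilde\psi_n'(x;\mathbf{a})^2=\cos^2((n\pm 1/2)\pi x)$ reduces the second system to the same cosines. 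With the correct indexing of $\psi_n$, both systems at $\mathbf{a}$ therefore enumerate the complete orthonormal cosine basis of $L^2(0,1)$.

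Next, I would fix $\alpha\in(5/2,3)$, set $M:=\|\mathbf{a}\|+1$, and take $\varepsilon<1$ so that $B(\mathbf{a},\varepsilon)\subset B(\mathbf{0},M)$. Applying Lemma~\ref{prop} with $\mathbf{p_3}=\mathbf{p_4}=\mathbf{a}$ then gives, for every $n>P$,
\[
\sup_{x\in[0,1]}\bigl|\phi_n(x;\mathbf{p_1})\phi_n(x;\mathbf{p_2})-\phi_n(x;\mathbf{a})^2\bigr|\le 4C\varepsilon\,n^{2-\alpha},
\]
and analogous bounds for the three other products. Writing $e_n$ for the $n$-th cosine and $d_n$ for the corresponding perturbed element of either system, this yields $\|d_n-e_n\|_{L^2}\le K\varepsilon\,n^{2-\alpha}$, so the tail sum obeys $\sum_{n>P}\|d_n-e_n\|_{L^2}^2\le K^2\varepsilon^2\sum_{n>P}n^{4-2\alpha}=K'\varepsilon^2$, convergent because $4-2\alpha<-1$.

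For the finitely many low modes $1\le n\le P$ the asymptotic bound no longer applies. Here Proposition~\ref{le2} provides real-analytic, hence continuous, dependence of $\phi_n$, $\psi_n$, $\phi_n'$, $\psi_n'$ on $\mathbf{p}$ near $\mathbf{a}\in\mathbf{W}$ in a norm dominating that of $L^2(0,1)$, so each of the $P$ discrepancies $\|d_n-e_n\|_{L^2}$ tends to $0$ as $\varepsilon\to 0$. By shrinking $\varepsilon$ once more, we can therefore ensure $\sum_{n\ge 1}\|d_n-e_n\|_{L^2}^2<1$, and the orthonormal version of Proposition~\ref{le3} concludes that each of the two systems is a Riesz basis of $L^2(0,1)$.

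The step I expect to require the most care is the uniform control of the low modes: one must read Proposition~\ref{le2} as delivering analyticity in a norm that dominates the $L^2$-norm of bilinear products such as $\phi_n\phi_n$ and $\tilde\phi_n'\tilde\phi_n'$, so that a single choice of $\varepsilon$ simultaneously bounds all $P$ low-index discrepancies. Once this is arranged, the remainder is the standard perturbation-of-Riesz-basis argument already used in \cite{SC1,Cau}.
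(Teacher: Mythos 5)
Your proposal is correct and follows essentially the same route as the paper's proof: identify the systems at $\mathbf{p_1}=\mathbf{p_2}=\mathbf{a}$ with the orthonormal cosine basis, control the tail $n>P$ via Lemma~\ref{prop} with $\mathbf{p_3}=\mathbf{p_4}=\mathbf{a}$ (the paper fixes $\alpha=2.9$, matching your requirement $\alpha>5/2$ for square-summability), handle the finitely many low modes by the analytic dependence of Proposition~\ref{le2}, and conclude with the orthonormal version of Proposition~\ref{le3}. Your closing caveat about needing the analyticity in a norm strong enough to control $L^2$-norms of the bilinear products is a point the paper glosses over (it estimates $\|fg\|$ by $\|f\|\,\|g\|$ for the low modes), so your version is, if anything, slightly more careful on that step.
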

\begin{proof}
 For $n\geq 1$, denote
 \begin{equation*}
 	e_0=1,
 \end{equation*}
 \begin{equation*}\label{6_1}
 	e_{2n}=2\sqrt{2}\bigg(\frac{1}{2n^2\pi^2}\phi_n'(x;\mathbf{a})\phi_n'(x;\mathbf{a})-\frac{1}{2}\bigg)=\sqrt{2}\cos (2n\pi x),
 \end{equation*}
 \begin{equation*}\label{6_2}
	e_{2n-1}=2\sqrt{2}\bigg(\frac{1}{2\big(n+\frac{1}{2}\big)^2\pi^2}\psi_n'(x;\mathbf{a})\psi_n'(x;\mathbf{a})-\frac{1}{2}\bigg)=\sqrt{2}\cos ((2n+1)\pi x),
\end{equation*}
and
\begin{equation*}
	d_0=1,
\end{equation*}
\begin{equation*}\label{6_3}
	d_{2n}=2\sqrt{2}\bigg(\frac{1}{2n^2\pi^2}\phi_n'(x;\mathbf{p_1})\phi_n'(x;\mathbf{p_2})-\frac{1}{2}\bigg),
\end{equation*}
\begin{equation*}\label{6_4}
	d_{2n-1}=2\sqrt{2}\bigg(\frac{1}{2\big(n+\frac{1}{2}\big)^2\pi^2}\psi_n'(x;\mathbf{p_1})\psi_n'(x;\mathbf{p_2})-\frac{1}{2}\bigg).
\end{equation*}

Note that $\{e_{n}\}_{n\geq 0}=\{1\}\cup\big\{\sqrt{2}\cos (n\pi x)\}_{n\geq 1}$ is an orthonormal basis in $L^2(0,1)$. Let $M=\|\mathbf{a}\|+1$. Choose  $$\alpha=2.9,\;\;\;\; \varepsilon_{2P+1}\!\!=\!\!\frac{1}{32\sqrt{\sum\limits_{n=2P+1}^{\infty}n^{-1.8}}},$$ where $P$ is from Lemma \ref{prop}. If $\mathbf{p_1}$, $\mathbf{p_2}\in B(\mathbf{a},\varepsilon_{2P+1})$, then, obviously, $\mathbf{p_1}$, $\mathbf{p_2}\in B(\mathbf{0},M)$. By Lemma \ref{prop}, we have
\begin{align*}
	&\sum_{n=2P+1}^{\infty} ||d_n-e_n||^2\\&\leq8\sum_{n=2P+1}^{\infty}(\|\mathbf{p_1}-\mathbf{p_2}\|+2\|\mathbf{p_1}-\mathbf{a}\|)^2 n^{-1.8}\\&\leq8\sum_{n=2P+1}^{\infty}(\|\mathbf{p_2}-\mathbf{a}\|+3\|\mathbf{p_1}-\mathbf{a}\|)^2 n^{-1.8}\\&\leq8\cdot(4\varepsilon_{2P+1})^2\sum_{n=2P+1}^{\infty}n^{-1.8}\\&<\frac{1}{2}.
\end{align*}

Put
$$
\delta=2\sqrt{1+\frac{1}{32\sqrt{P}}}-2.
$$
By Proposition \ref{le2}, $\phi_n'(x;\mathbf{p})$ and $\psi_n'(x;\mathbf{p})$ are both analytically dependent on $\mathbf{p}$ for $n=\overline{1,P}$. Consequently, there exists $\{\varepsilon_{n}\}_{n=\overline{1,2P}}$ such that,
if $\mathbf{p}\in B(\mathbf{a},\varepsilon_{2n})$, $n=\overline{1,P}$, then 
\begin{equation}\label{11}
	\|\phi_n'(x;\mathbf{p})-\phi_n'(x;\mathbf{a})\|\leq \delta,
\end{equation}
so
\begin{equation}\label{12}
	\|\phi_n'(x;\mathbf{p})\|<\delta+\|\phi_n'(x;\mathbf{a})\|<\delta+n\pi;
\end{equation} 
and, if $\mathbf{p}\in B(\mathbf{a},\varepsilon_{2n-1})$, $n=\overline{1,P}$, then
\begin{equation}\label{13}
	\|\psi_n'(x;\mathbf{p})-\psi_n'(x;\mathbf{a})\|\leq \delta,
\end{equation}
so
\begin{equation}\label{14}
	\|\psi_n'(x;\mathbf{p})\|<\delta+\|\psi_n'(x;\mathbf{a})\|<\delta+(n+\frac{1}{2}\pi).
\end{equation}

Choosing $\varepsilon_0=\min\{\varepsilon_1, \varepsilon_2,\dots, \varepsilon_{2P}\}$, letting $\mathbf{p_1}$, $\mathbf{p_2}\in B(\mathbf{a}, \varepsilon_0)$, from \eqref{11} and \eqref{12}, we have
\begin{align*}
	&\|d_{2n}-e_{2n}\|\\&=\frac{8}{2n^2\pi^2}\|\phi_n'(x;\mathbf{p_1})\phi_n'(x;\mathbf{p_2})-\phi_n'(x;\mathbf{a})\phi_n'(x;\mathbf{a})\|\\
	&\leq \frac{8}{2n^2\pi^2}\!\big(\|\phi_n'\!(\!x;\mathbf{p_1})\phi_n'\!(\!x;\mathbf{p_2})\!\!-\!\!\phi_n'\!(\!x;\mathbf{a})\phi_n'\!(\!x;\mathbf{p_2})\!\|\!\!+\!\!\|\phi_n'\!(\!x;\mathbf{a})\phi_n'\!(\!x;\mathbf{p_2})\!\!-\!\!\phi_n'\!(\!x;\mathbf{a})\phi_n'\!(x;\mathbf{a})\!\|\big)\\
	&\leq\frac{8}{2n^2\pi^2}(\|\phi_n'(\!x;\mathbf{p_2})\|\|\phi_n'(x;\mathbf{p_1})-\phi_n'(x;\mathbf{a})\|+\|\phi_n'(x;\mathbf{a})\|\|\phi_n'(x;\mathbf{p_2})-\phi_n'(x;\mathbf{a})\|)\\
	&\leq\frac{4(\delta+n\pi)}{n^2\pi^2}\big(\|\phi_n'(x;\mathbf{p_1})-\phi_n'(x;\mathbf{a})\|+\|\phi_n'(x;\mathbf{p_2})-\phi_n'(x;\mathbf{a})\|\big)\\
	&<\frac{8\delta(\delta+n\pi)}{n^2\pi^2}\\
	&<\frac{8\delta^2+32\delta}{9}\\
	&=\frac{8}{9}\bigg(8+\frac{4}{32\sqrt{P}}-8\sqrt{1+\frac{1}{32\sqrt{P}}}+8\sqrt{1+\frac{1}{32\sqrt{P}}}-8\bigg)\\&=\frac{1}{9\sqrt{P}}.
\end{align*}
Analogously, from \eqref{13} and \eqref{14}, we obtain
\begin{align*}
	&\|d_{2n-1}-e_{2n-1}\|\\&=\!\frac{8}{2\big(n+\frac{1}{2}\big)^2\pi^2}\|\psi_n'(x;\mathbf{p_1})\psi_n'(x;\mathbf{p_2})-\psi_n'(x;\mathbf{a})\psi_n'(x;\mathbf{a})\|\\
	&\leq \frac{8}{2\big(n\!\!+\!\!\frac{1}{2}\big)^2\!\pi^2}\!\big(\|\!\psi_n'\!(\!x;\mathbf{p_1}\!)\psi_n'\!(\!x;\mathbf{p_2}\!)\!\!-\!\!\psi_n'\!(\!x;\mathbf{a}\!)\psi_n'\!(\!x;\mathbf{p_2}\!)\!\|\!\!+\!\!\|\!\psi_n'\!(\!x;\mathbf{a}\!)\psi_n'\!(\!x;\mathbf{p_2}\!)\!\!-\!\!\psi_n'\!(\!x;\mathbf{a}\!)\psi_n'\!(\!x;\mathbf{a})\!\|\big)\\
	&\leq \frac{8}{2\big(n\!+\!\frac{1}{2}\big)^2\pi^2}\big(\|\psi_n'(x;\mathbf{p_2})\|\|\psi_n'(x;\mathbf{p_1})\!\!-\!\!\psi_n'(x;\mathbf{a})\|\!\!+\!\!\|\psi_n'(x;\mathbf{a})\|\|\psi_n'(x;\mathbf{p_2})\!\!-\!\!\psi_n'(x;\mathbf{a})\|\big)\\
	&\leq\frac{4\big(\delta+\big(n+\frac{1}{2}\big)\pi\big)}{\big(n+\frac{1}{2}\big)^2\pi^2}\big(\|\psi_n'(x;\mathbf{p_1})-\psi_n'(x;\mathbf{a})|+|\psi_n'(x;\mathbf{p_2})-\psi_n'(x;\mathbf{a})\|\big)\\
	&<\frac{8\delta\big(\delta+\big(n+\frac{1}{2}\big)\pi\big)}{\big(n+\frac{1}{2}\big)^2\pi^2}\\
	&<\frac{8\delta^2+32\delta}{9}\\
	&=\frac{1}{9\sqrt{P}}.
\end{align*}

Thus, if $\mathbf{p_1}$, $\mathbf{p_2}\in B(\mathbf{a}, \varepsilon_0)$, we have
\begin{align*}
	\sum_{n=0}^{2P} \| d_n-e_n \|^2<\sum_{n=1}^{2P} \frac{1}{{81P}}=\frac{2}{81}<\frac{1}{2}.
\end{align*}

Set $\varepsilon=\min\{\varepsilon_0, \varepsilon_{2P+1}\}$. For $\mathbf{p_1}$, $\mathbf{p_2}\in B(\mathbf{a},\varepsilon)$, the above calculation implies that
\begin{align*}
	\sum_{n=0}^{\infty} \| d_n-e_n \|^2<1.
\end{align*}
By Proposition \ref{le3}, the sequence $\{d_{n}\}_{n\geq 0}$ is a Riesz basis in $L^2(0,1)$. Consequently, $$\big\{1\big\}\cup\bigg\{2\sqrt{2}\bigg(\frac{\phi_n'(x;\mathbf{p_1})\phi_n'(x;\mathbf{p_2})}{2n^2\pi^2}-\frac{1}{2}\bigg),2\sqrt{2}\bigg(\frac{\psi_n'(x;\mathbf{p_1})\psi'_n(x;\mathbf{p_2})}{2(n+\frac{1}{2})^2\pi^2}-\frac{1}{2}\bigg)\bigg\}_{n\geq 1}$$
is a Riesz basis in $L^2(0,1)$.

The fact that the sequence $$\big\{1\big\}\cup\bigg\{\sqrt{2}(1-\phi_n(x;\mathbf{p_1}){\phi}_n(x;\mathbf{p_2})),\sqrt{2}(1-\psi_n(x;\mathbf{p_1}){\psi}_n(x;\mathbf{p_2}))\bigg\}_{n\geq 1}$$ also forms a Riesz basis in $L^2(0,1)$ can be established similarly.
\end{proof}

\begin{proof}[Proof of the Theorem \ref{thm}]
Since
\begin{equation} \label{eqlan}
	\lambda_n(\mathbf{p}_1)={\lambda}_n(\mathbf{p}_2) \text { for all } n,
\end{equation}
using the sesquilinear form \eqref{frakq}, we obtain
\begin{align*}
		&\mathfrak{q}_{(p_1,q_1)}[{\phi}_{n}(x;\mathbf{p_1}),{\phi}_{n}(x;\mathbf{p_2})]=({\phi}_{n}''(x;\mathbf{p_1}),{\phi}_{n}''(x;\mathbf{p_2}))+(p_1{\phi}_{n}'(x;\mathbf{p_1}),{\phi}_{n}'(x;\mathbf{p_2}))\\&+(q_1{\phi}_{n}(x;\mathbf{p_1}),{\phi}_{n}(x;\mathbf{p_2}))=\lambda_n(\mathbf{p}_1)({\phi}_{n}(x;\mathbf{p_1}),{\phi}_{n}(x;\mathbf{p_2}))
\end{align*}
and
\begin{align*}
	&\mathfrak{q}_{(p_2,q_2)}[{\phi}_{n}(x;\mathbf{p_1}),{\phi}_{n}(x;\mathbf{p_2})]=({\phi}_{n}''(x;\mathbf{p_1}),{\phi}_{n}''(x;\mathbf{p_2}))+(p_2{\phi}_{n}'(x;\mathbf{p_1}),{\phi}_{n}'(x;\mathbf{p_2}))\\&+(q_2{\phi}_{n}(x;\mathbf{p_1}),{\phi}_{n}(x;\mathbf{p_2}))=\lambda_n(\mathbf{p}_2)({\phi}_{n}(x;\mathbf{p_1}),{\phi}_{n}(x;\mathbf{p_2})).
\end{align*}
Subtracting the second relation from the first one, we get
\begin{align}\label{43}
&(p_1{\phi}_{n}'(x;\mathbf{p_1}),{\phi}_{n}'(x;\mathbf{p_2}))+(q_1{\phi}_{n}(x;\mathbf{p_1}),{\phi}_{n}(x;\mathbf{p_2}))=(p_2{\phi}_{n}'(x;\mathbf{p_1}),{\phi}_{n}'(x;\mathbf{p_2}))\nonumber\\&+(q_2{\phi}_{n}(x;\mathbf{p_1}),{\phi}_{n}(x;\mathbf{p_2})).
\end{align}
Analogously, since
	\begin{equation*} 
	\mu_n(\mathbf{p}_1)={\mu}_n(\mathbf{p}_2) \text { for all } n,
	\end{equation*}
	we have
	\begin{align}\label{44}
	&(p_1{\psi}_{n}'(x;\mathbf{p_1}),{\psi}_{n}'(x;\mathbf{p_2}))+(q_1{\psi}_{n}(x;\mathbf{p_1}),{\psi}_{n}(x;\mathbf{p_2}))=(p_2{\psi}_{n}'(x;\mathbf{p_1}),{\psi}_{n}'(x;\mathbf{p_2}))\nonumber\\&+(q_2{\psi}_{n}(x;\mathbf{p_1}),{\psi}_{n}(x;\mathbf{p_2})).
	\end{align}

	\textbf{Part (1)}	
	Since $q_1(x)=q_2(x)$, using \eqref{43} and \eqref{44}, we obtain
	\begin{equation}\label{5}
		\int_0^1({p_1(x)-p_2(x)})\phi_{n}'(x;\mathbf{p_1})\phi_{n}'(x;\mathbf{p_2})dx=0.
	\end{equation}
	and
	\begin{equation}\label{6}
	\int_0^1({p_1(x)-p_2(x)})\psi_{n}'(x;\mathbf{p_1})\psi_{n}'(x;\mathbf{p_2})dx=0,
\end{equation}
respectively.

 The equalities \eqref{eqlan} and the eigevalue asymptotics from  	
 \cite{Sc}:
 \begin{equation*}
 	\lambda_n(\mathbf{p}_i)=n^4\pi^4+n^2\pi^2\bigg(\int_{0}^{1}p_i(x)dx\bigg)+o(n^{\frac{3}{2}}), \quad i = 1, 2,
 \end{equation*}
 together imply 
 \begin{equation}\label{7}
 	\int_{0}^{1}(p_1(x)-p_2(x))dx=0.
 \end{equation}
From $\eqref{5}$ and $\eqref{7}$ we get
\begin{equation*}
	2\sqrt{2}\int_0^1({p_1(x)-p_2(x)})\bigg(\frac{\phi_n'(x;\mathbf{p_1})\phi_n'(x;\mathbf{p_2})}{2n^2\pi^2}-\frac{1}{2}\bigg)dx=0.
\end{equation*}
Similarly, from $\eqref{6}$ and $\eqref{7}$, we obtain
\begin{equation*}
	2\sqrt{2}\int_0^1({p_1(x)-p_2(x)})\bigg(\frac{\psi_n'(x;\mathbf{p_1})\psi'_n(x;\mathbf{p_2})}{2(n+\frac{1}{2})^2\pi^2}-\frac{1}{2}\bigg)dx=0.
\end{equation*}
 By Lemma \ref{riesz}, the collection $$\big\{1\big\}\cup\bigg\{2\sqrt{2}\bigg(\frac{\phi_n'(x;\mathbf{p_1})\phi_n'(x;\mathbf{p_2})}{2n^2\pi^2}-\frac{1}{2}\bigg),2\sqrt{2}\bigg(\frac{\psi_n'(x;\mathbf{p_1})\psi'_n(x;\mathbf{p_2})}{2(n+\frac{1}{2})^2\pi^2}-\frac{1}{2}\bigg)\bigg\}_{n\geq 1}$$ is a Riesz basis in $L^2(0,1)$, which implies that $p_1(x)=p_2(x)$ a.e. on $(0,1)$ and so proves part (1) of Theorem~\ref{thm}.
 
 \textbf{Part (2)}	
By \eqref{43} we obtain
 \begin{equation}\label{51}
 	\int_0^1({q_1(x)-q_2(x)})\phi_{n}(x;\mathbf{p_1})\phi_{n}(x;\mathbf{p_2})dx=0,
 \end{equation}
since $p_1(x)=p_2(x)$.
From \eqref{a1} we have
\begin{equation}\label{3.4}
	\sup _{x \in[0,1]}\left| \phi_{n}(x;\mathbf{p_1})\phi_{n}(x;\mathbf{p_2})-\phi_n^{2}(x; \mathbf{0})\right| 
	\leq
	C(\|\mathbf{p_1}-\mathbf{p_2}\|+2\|\mathbf{p_1}\|) n^{2-\alpha},
\end{equation}
where $2<\alpha<3$.
Substituting \eqref{3.4} into \eqref{51}, we obtain
\begin{align*}
	0&=\int_{0}^{1}(q_1(x)-q_2(x))\phi_{n}(x;\mathbf{p_1})\phi_{n}(x;\mathbf{p_2})dx\\&=\int_{0}^{1}(q_1(x)-q_2(x))[O(n^{2-\alpha})+2\sin^2n\pi x]dx\\&\nonumber=O(n^{2-\alpha})\!\!\int_{0}^{1}\!\!\!(q_1(x)\!-\!q_2(x))dx\!+\!\!\!\int_{0}^{1}\!(q_1(x)\!-\!\!q_2(x))dx\!-\!\!\!\int_{0}^{1}\!\!(q_1(x)\!-\!q_2(x))\cos (2n\pi x)dx.
\end{align*}
Let $n\rightarrow\infty$ in the above relation. Since $q_1(x)$, $q_2(x)\in L^{2}(0,1)$ and $2<\alpha<3$, according to Riemann-Lebesgue's theorem, we get
\begin{equation}\label{int}
	\int_{0}^{1}(q_1(x)-q_2(x))dx=0.
\end{equation}
 Similarly, by \eqref{44} we obtain
  \begin{equation}\label{52}
 	\int_0^1({q_1(x)-q_2(x)})\psi_{n}(x;\mathbf{p_1})\psi_{n}(x;\mathbf{p_2})dx=0.
 \end{equation}
From $\eqref{51}$ and $\eqref{int}$ we get
\begin{equation}\label{9}
		\sqrt{2}\int_0^1({q_1(x)-q_2(x)})(1-\phi_n(x;\mathbf{p_1}){\phi}_n(x;\mathbf{p_2}))dx=0.
\end{equation}
Similarly, the relations $\eqref{int}$ and $\eqref{52}$ imply
\begin{equation}\label{99}
	\sqrt{2}\int_0^1({q_1(x)-q_2(x)})(1-\psi_n(x;\mathbf{p_1}){\psi}_n(x;\mathbf{p_2}))dx=0.
\end{equation}
 By Lemma \ref{riesz}, $$\{1\}\cup\{\sqrt{2}(1-\phi_n(x;\mathbf{p_1}){\phi}_n(x;\mathbf{p_2})),\sqrt{2}(1-\psi_n(x;\mathbf{p_1}){\psi}_n(x;\mathbf{p_2}))\}_{n\geq 1}$$ is a Riesz basis in $L^2(0,1)$. Therefore, the relations \eqref{int}, \eqref{9}, and \eqref{99} imply that $q_1(x)=q_2(x)$ a.e. on $(0,1)$. 
\end{proof}

\vspace{1mm}
\th{Conflict of Interest} {\rm The authors declare no conflict of interest.}
\par\vspace{1mm}

\th{Acknowledgements} {\rm Ai-wei Guan and Chuan-fu Yang were supported by the National Natural Science Foundation of China (Grant No. 11871031) and by the Natural Science Foundation of the Jiangsu Province of China (Grant No. BK 20201303).} %The authors thank the referees for their insightful comments and helpful suggestions. }

\end{document}